\titleformat{\section}[block]{\normalsize\bfseries}{\arabic{section}.}{1em}{ }[]
\titleformat{\subsection}[block]{\normalsize\bfseries}{\arabic{section}.\arabic{subsection}}{1em}{}[]
\titleformat{\subsubsection}[block]{\normalsize\bfseries}{\arabic{subsection}-\alph{subsubsection}}{1em}{}[]
\titleformat{\paragraph}[block]{\small\bfseries}{[\arabic{paragraph}]}{1em}{}[]
\numberwithin{equation}{section}
\newtheorem{theorem}{Theorem}[section]
\newtheorem{lemma}[theorem]{Lemma}
\newtheorem{proposition}[theorem]{Proposition}
\newtheorem{remark}[theorem]{Remark}
\newcommand{\ZZ}{\mathbb{Z}}
\newcommand{\NN}{\mathbb{N}}
\newcommand{\RR}{\mathbb{R}}
\newcommand{\h}{h^{n+1}}
\newcommand{\hn}{h^{n}}
\newcommand{\uu}{{u}^{n+1}}
\newcommand{\un}{{u}^{n}}
\newcommand{\dd}{\,\mathrm{d}}
\newcommand{\B}{\dot B^{0}_{[2, \infty],1}}
\newcommand{\Bo}{\dot B^{1}_{[2, \infty],1}}
\newcommand{\Bt}{\dot B^{2}_{[2, \infty],1}}
\newcommand{\e}{\varepsilon}
\newcommand{\rd}{\,\mathrm{d}}
\DeclareMathOperator{\Div}{div}
\newcommand{\Rmnum}[1]{\expandafter\@slowromancap\romannumeral #1@}
\begin{document}
\title[] {Ill-posedness issue for the 2D viscous shallow water equations in some
critical Besov spaces }
\author[Q. Chen, Y. Nie]{Qionglei Chen, Yao Nie}
\address{Institute of Applied Physics and Computational
Mathematics, Beijing 100191, P.R. China}
\email{chen\_qionglei@iapcm.ac.cn}
\address{Institute of Applied Physics and Computational
Mathematics, Beijing 100191, P.R. China}
\email{nieyao930930@163.com}
\begin{abstract}
We study the Cauchy problem of the 2D viscous shallow water equations in some critical Besov spaces $\dot B^{\frac{2}{p}}_{p,1}(\RR^2)\times \dot B^{\frac{2}{p}-1}_{p,q}(\RR^2)$. As is known,  this system is locally well-posed for large initial data as well as globally well-posed for small initial data in $\dot B^{\frac{2}{p}}_{p,1}(\RR^2)\times \dot B^{\frac{2}{p}-1}_{p,1}(\RR^2)$ for $p<4$ and ill-posed in $\dot B^{\frac{2}{p}}_{p,1}(\RR^2)\times \dot B^{\frac{2}{p}-1}_{p,1}(\RR^2)$ for $p>4$. In this paper, we  prove that this system is ill-posed for the critical case $p=4$ in the sense of ``norm inflation''. Furthermore, we also show that the system is ill-posed in $\dot B^{\frac{1}{2}}_{4,1}(\RR^2)\times \dot B^{-\frac{1}{2}}_{4,q}(\RR^2)$ for  any $q\neq 2$.
\end{abstract}
\maketitle
\section{Introduction}
The 2D viscous shallow water equations read as follows:
\begin{equation}\label{sw}
\left\{ \aligned
    & \partial_t h+\Div (h {u})=0,\,\,\, &t>0, x\in \RR^2,\\
     &h(\partial_t {u}+ {u}\cdot\nabla {u})-\nu\nabla\cdot(h\nabla u)+h\nabla h=0,\,\,\quad\qquad  &t>0, x\in \RR^2,\\
     &h(0, x)=h_0(x),\, {u}(0, x)= {u}_0(x),\,\, &x\in \RR^2.
\endaligned
\right.
\end{equation}
where $h(t,x)$ denotes the height of fluid surface, $ u(t,x)=(u_1(t,x), u_2(t,x))$ represents the horizontal velocity field and $\nu>0$  is the viscous coefficient.

The well-posedness of system \eqref{sw} has been widely investigated during the past 30 years. Readers can refer to \cite{BDM} for more details. Bui \cite{Bui}  proved the local existence and uniqueness
of classical solutions to the Cauchy-Dirichlet problem for system \eqref{sw}
with initial data in $C^{2+\alpha}$.  Kloeden~\cite{Kl85} and Sundbye \cite{Su96} independently showed global existence and uniqueness of classical solutions to the Cauchy-Dirichlet problem in Sobolev spaces for small initial data. Sundbye \cite{Su98} established a global existence and uniqueness theorem of strong solutions for the Cauchy problem for equations \eqref{sw} with small initial data. Wang and Xu \cite{WX05}  got the local existence of solution for all size initial data and global existence for small initial data $u_0$ if $h_0-\bar{h}_0$ is small enough in $H^{2+s}$ for any $s>0$. Chen, Miao and Zhang \cite{CMZ08} introduced some kind of weighted Besov space to prove the existence and uniqueness of the solutions to a more general diffusion system  with low regularity assumptions on the initial data  as well as the initial height bounded away from zero.  For more results on well-posedness of system \eqref{sw} in Besov spaces,  readers can refer to \cite{LY16, LiY16, LY15}.

For convenience, we take $\bar{h}_0=1$, $\nu=1$, and  substitute $h$ by $1+h$ in equations \eqref{sw}, then it yields that
\begin{equation}\label{sw1}
\left\{ \aligned
    & \partial_t h+\Div  {u}+ {u}\cdot\nabla h=-h\Div  {u},\,\,\, &t>0, x\in \RR^2,\\
     &\partial_t {u}+ {u}\cdot\nabla {u}-\Delta {u}+\nabla h=\nabla(\ln(1+h))\cdot\nabla {u},\,\,\quad\qquad  &t>0, x\in \RR^2,\\
     &h(0, x)=h_0(x),\, {u}(0, x)= {u}_0(x),\,\, &x\in \RR^2.
\endaligned
\right.
\end{equation}
With respect to compressible Navier-Stokes equations under barotropic condition, which possess similar structure of system~\eqref{sw1},
there has a vast mathematical literature on well-poseness and ill-posedness results. In terms of the critical Besov spaces, it has been proved that compressible Navier-Stokes system is well-posed in critical Besov spaces $\dot B^{\frac{d}{p}}_{p, 1}(\RR^d)\times \dot B^{\frac{d}{p}-1}_{p, 1}(\RR^d)$ for $1\le p<2d$(see \cite{CMZ10, Dan00, Dan05}). And this system is ill-posed for $p\ge 2d$ (see \cite{CMZ15, IO21}). Following methods in \cite{CMZ08, CMZ10}, one can obtain that system \eqref{sw1} is  local well-posed for large initial data and global well-posed for small initial data in $\dot B^{\frac{2}{p}}_{p, 1}(\RR^2)\times \dot B^{\frac{2}{p}-1}_{p, 1}(\RR^2)$ for $1\le p<4$. Recently, Li, Hong and Zhu \cite{LHZ} proved system \eqref{sw1} is ill-posed for $p>4$. However, the question that whether system \eqref{sw1} is ill-posed or not for the endpoint case $p=4$ has not been answered.

In this paper, we aim to prove the ill-posedness of shallow water equations \eqref{sw1} for the endpoint case $p=4$. Motivated by \cite{CMZ15, IO21}, we construct initial data in the Schwartz class which are arbitrarily small in $\dot B^{\frac{1}{2}}_{4,1}(\RR^2)\times \dot B^{-\frac{1}{2}}_{4,1}(\RR^2)$, meanwhile the corresponding solutions are arbitrarily large in $\dot B^{\frac{1}{2}}_{4,1}(\RR^2)\times \dot B^{-\frac{1}{2}}_{4,1}(\RR^2)$ after an arbitrarily short time. This phenomenon shows that the solution map $(h_0, u_0)\mapsto (h[h_0], u[u_0])$ is discontinuous in $\dot B^{\frac{1}{2}}_{4,1}(\RR^2)\times \dot B^{-\frac{1}{2}}_{4,1}(\RR^2)$. Moreover, we observe that the special nonlinear mechanism and $L^2(\RR^2) \hookrightarrow\dot B^{-\frac{1}{2}}_{4, 2}(\RR^2)$ lead to the second iteration is continuous in $\dot B^{-\frac{1}{2}}_{4, 2}(\RR^2)$. Therefore,  we generalize the ill-posedness results and show that system \eqref{sw1} is ill-posed in $\dot B^{\frac{1}{2}}_{4,1}(\RR^2)\times \dot B^{-\frac{1}{2}}_{4,q}(\RR^2)$ for any $q\neq 2$. Our main result is as follows:

\begin{theorem}\label{thm}For $1\le q<2$, system \eqref{sw1} is ill-posed in critical spaces $\dot B^{\frac{1}{2}}_{4,1}(\RR^2)\times \dot B^{-\frac{1}{2}}_{4,q}(\RR^2)$. More precisely,  for any $\delta>0$ , there exists an initial data ${u}_0\in \dot B^{-\frac{1}{2}}_{4,q}\cap \mathcal{S}$ satisfying
\[h_0=0,\qquad\|u_0\|_{\dot B^{-\frac{1}{2}}_{4,q}}\le \delta, \]
such that the corresponding solution $(h, u)$ to system \eqref{sw1} satisfies
\[\|u(\cdot, t)\|_{\dot B^{-\frac{1}{2}}_{4,q}}> \frac{1}{\delta}, \quad \text{for some}\, \,0<t<\delta. \]
\end{theorem}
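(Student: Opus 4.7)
The plan is the Picard-iteration norm-inflation scheme of Chen-Miao-Zhang \cite{CMZ15} and Iwabuchi-Ogawa \cite{IO21}, adapted to \eqref{sw1}. Setting $h_0 = 0$, denote by $\mathcal{L}(t)$ the propagator of the linearization of \eqref{sw1} about the null state---which couples the heat semigroup on $u$ to an acoustic evolution with $h$ through $\nabla h$ and $\Div u$---and expand
\[
(h(t), u(t)) = \mathcal{L}(t)(0,u_0) + (h_{(2)}(t), u_{(2)}(t)) + R(t),
\]
where $(h_{(2)}, u_{(2)})$ collects the quadratic second iterates from the nonlinearities $-u\cdot\nabla u$, $-u\cdot\nabla h - h\Div u$, and $\nabla\ln(1+h)\cdot\nabla u$, all evaluated at the linear iterate. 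The goal is to construct Schwartz $u_0$ with $\|u_0\|_{\dot B^{-1/2}_{4,q}}\le \delta$ but $\|u_{(2)}(t)\|_{\dot B^{-1/2}_{4,q}} > 2/\delta$ at some $t\in(0,\delta)$, with the linear part small and the remainder $R$ negligible.

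The data will be a lacunary sum of highly oscillating two-lobe wave packets
\[
u_0 = \sum_{k=1}^M \mathbf{a}_k\,\Phi(x)\,\bigl[\cos(N_k \xi^{(1)}_k\cdot x) + \cos(N_k \xi^{(2)}_k\cdot x)\bigr],
\]
with $N_k = 2^{2k}$ lacunary, $\Phi$ a unit-scale Schwartz bump, prescribed separation $\xi^{(1)}_k - \xi^{(2)}_k \sim 2^{-k}\eta$ for a fixed unit vector $\eta$, and polarization vectors $\mathbf{a}_k\in\RR^2$ tailored so that the resonant bilinear output survives the acoustic cancellation from $\nabla h_L$. Each summand has $\dot B^{-1/2}_{4,q}$ norm $\sim N_k^{-1/2}$; by lacunarity $\|u_0\|_{\dot B^{-1/2}_{4,q}}\lesssim \delta$ uniformly in $M$. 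The crucial bilinear computation is that the cross-interaction of the two frequency lobes of the $k$-th packet produces, after time-integration against the linear propagator, a piece $v_k$ of $u_{(2)}(t)$ whose Fourier support is concentrated at scale $\sim 2^k$, with $\|v_k\|_{\dot B^{-1/2}_{4,q}}\gtrsim c\,\delta^2 \tau_k$ for a suitable time-scale $\tau_k$. The authors' observation---that $L^2(\RR^2)\hookrightarrow \dot B^{-1/2}_{4,2}(\RR^2)$ combined with $L^4\cdot L^4\hookrightarrow L^2$ forces $\|u_{(2)}\|_{\dot B^{-1/2}_{4,2}}\lesssim \delta^2$ uniformly in $M$---means the $\ell^2$ summation across the disjoint dyadic blocks is bounded. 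For $q<2$ the $\ell^q$ summation genuinely amplifies the bilinear output by a factor growing in $M$, giving
\[
\|u_{(2)}(t)\|_{\dot B^{-1/2}_{4,q}} \gtrsim \delta^{2}\, M^{1/q - 1/2}
\]
on an appropriate time window. For $M$ large and $t\in(0,\delta)$ chosen accordingly, the right-hand side exceeds $2/\delta$, yielding the norm inflation.

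The main technical obstacle is establishing this bilinear lower bound with the correct $M$-dependence, against the specific nonlinear structure of \eqref{sw1}. The term $\nabla h_L$ in the Duhamel integral for $u_{(2)}$ is curl-free and carries an acoustic-type cancellation that would annihilate the compressible component of the resonant output; the polarizations $\mathbf{a}_k$ must therefore be selected with a prescribed non-vanishing solenoidal projection so that a resonant piece survives, and the small separation $2^{-k}\eta$ between the two lobes must be balanced so that the resulting output truly lands in the intended dyadic annulus. A secondary difficulty is the control of the Picard remainder $R$: since the initial data is Schwartz, the solution of \eqref{sw1} exists locally in classical function spaces by \cite{Bui, Su98}, and combining this with Bony paraproduct estimates together with the observation that $R$ consists of three-fold or higher interactions, one obtains a bound $\|R(t)\|_{\dot B^{-1/2}_{4,q}}\lesssim \delta^3\cdot M^{\alpha}$ with $\alpha < 1/q - 1/2$, which remains dominated by the $u_{(2)}$ lower bound on the relevant time interval $(0,\delta)$.
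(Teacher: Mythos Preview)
The lacunary two-lobe construction you propose does not yield the claimed $M^{1/q-1/2}$ amplification. With $N_k=2^{2k}$, unit-scale envelope $\Phi$, and amplitudes $|\mathbf a_k|$ chosen so that $\|u_0\|_{\dot B^{-1/2}_{4,q}}\le\delta$, the cross-interaction of the two lobes of the $k$-th packet produces an output at scale $\sim 2^k$ whose $L^4$ norm, after the Duhamel integral, is of order $|\mathbf a_k|^2\,N_k\cdot N_k^{-2}=|\mathbf a_k|^2\,2^{-2k}$: the derivative contributes $N_k$, but the kernel $\frac{e^{-t|\xi|^2}-e^{-t(|\xi-\eta|^2+|\eta|^2)}}{|\xi-\eta|^2+|\eta|^2-|\xi|^2}$ saturates at $N_k^{-2}$. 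Hence the $\dot B^{-1/2}_{4,q}$-block of $v_k$ is $\sim|\mathbf a_k|^2\,2^{-5k/2}$, which decays geometrically in $k$; the $\ell^q$ sum over $k$ is $O(\delta^2)$ uniformly in $M$, not $\delta^2 M^{1/q-1/2}$. Equalizing the blocks by taking $|\mathbf a_k|\sim 2^{5k/4}$ forces $\|u_0\|_{\dot B^{-1/2}_{4,q}}\sim 2^{M/4}\to\infty$. On top of this, there is no common time at which all packets contribute near-optimally: the $k$-th output is appreciable only for $t\in[2^{-4k},2^{-2k}]$, and these windows have empty intersection once $M\ge 3$. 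The second bilinear piece $\nabla\!\int_0^s\Div U_0\,d\tau\cdot\nabla U_0$ obeys the same scaling and does not rescue the argument.

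The paper's construction for $1\le q<2$ rests on a different mechanism. All summands carry the \emph{same} high frequency $2^N$---so there is a single relevant time $t_0=(\ln N)^{-1}2^{-2N}$---and the many output scales are generated by the \emph{envelope} scales rather than by any lobe separation:
\[
u_0=\frac{2^{N/2}}{N^{1/4}\ln N}\sum_{-\delta N\le j\le 0}2^{j/2}\,\Phi_{j,N}(x)\bigl(\sin(2^Nx_1),\cos(2^Nx_1)\bigr),\qquad \Phi_{j,N}(x)=\varphi_0\bigl(2^j(x-2^{|j|+2N}e_1)\bigr).
\]
The bumps are placed at widely separated spatial centers, which makes cross-terms between different $j$ negligible; the self-interaction of the $j$-th bump outputs $\Phi_{j,N}^2$ at scale $2^j$, and the weights $2^{j/2}$ are exactly what renders the blocks $2^{-j/2}\|\dot\Delta_j U_1(t_0)\|_{L^4}$ all comparable (of size $\sim N^{-1/2}(\ln N)^{-3}$). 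Summing $\sim\delta N$ equal blocks in $\ell^q$ then gives $N^{1/q-1/2}(\ln N)^{-3}$. The remainder $U_2$ is not handled by a generic cubic estimate in $\dot B^{-1/2}_{4,q}$: the paper first proves local existence with quantitative bounds in the auxiliary algebra-type scale $\dot B^0_{2,1}\cap\dot B^0_{\infty,1}$ (Proposition~\ref{exist}), then closeness of $(h,u)$ to the linear flow in that scale (Proposition~\ref{minius}), and from these derives $\|U_2(t_0)\|_{\dot B^{-1/2}_{4,1}}\lesssim 2^{(-\frac12+\frac52\varepsilon+\frac32\delta)N}$.
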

\begin{remark}
Generally speaking, researchers are focused on the well-posedness and ill-posedness of system \eqref{sw1} in critical Bseov spaces $\dot B^{\frac{2}{p}}_{p,1}(\RR^2)\times \dot B^{\frac{2}{p}-1}_{p,1}(\RR^2)$.
 Our result not only shows the ill-posedness of  system \eqref{sw1} in endpoint case $\dot B^{\frac{1}{2}}_{4,1}(\RR^2)\times \dot B^{-\frac{1}{2}}_{4,1}(\RR^2)$, but also generalizes the ill-posedness results in more critical Bseov spaces.
\end{remark}
\begin{theorem}\label{thm}For $q>2$, system \eqref{sw1} is ill-posed in critical spaces $\dot B^{\frac{1}{2}}_{4,1}(\RR^2)\times \dot B^{-\frac{1}{2}}_{4,q}(\RR^2)$. More precisely,  for any $\delta>0$ , there exists an initial data ${u}_0\in \dot B^{-\frac{1}{2}}_{4,q}\cap \mathcal{S}$ satisfying
\[h_0=0,\qquad\|u_0\|_{\dot B^{-\frac{1}{2}}_{4,q}}\le \delta, \]
such that the corresponding solution $(h, u)$ to system \eqref{sw1} satisfies
\[\|u(\cdot, t)\|_{\dot B^{-\frac{1}{2}}_{4,q}}> \frac{1}{\delta}, \quad \text{for some}\, \,0<t<\delta. \]
\end{theorem}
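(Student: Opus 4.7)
The plan is to adapt the norm-inflation construction underlying the companion result for $1\le q<2$. The mechanism there---superposing many high-frequency Schwartz packets whose bilinear interaction concentrates into a single low-frequency Littlewood--Paley block---is insensitive to the value of $q$ on the output side, since concentration in one dyadic block makes all $\ell^q$-summations comparable. What changes for $q>2$ is only how one balances the packet count against the individual amplitudes: the weaker $\dot B^{-1/2}_{4,q}$ norm of the data allows $N$ packets at amplitude $\e$ to have norm only $\sim\e N^{1/q}$, while the quadratic output remains of order $N^2\e^2$ in \emph{every} Besov index~$q$.

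\textbf{Picard decomposition.} I would write $u=u_L+u_N+u_R$ and $h=h_L+h_N+h_R$, where $(h_L,u_L)$ solves the linearization of \eqref{sw1} at $(0,0)$---namely $\partial_t h+\Div u=0$, $\partial_t u-\Delta u+\nabla h=0$---with data $(0,u_0)$; where $(h_N,u_N)$ solves the same linear system driven by the quadratic source of \eqref{sw1} evaluated at $(h_L,u_L)$; and where $(h_R,u_R)$ absorbs all cubic and higher terms. The linear semigroup has an explicit symbol in frequency space, its incompressible part reducing to the heat semigroup.

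\textbf{Data and resonance.} Take $h_0=0$ and $u_0=\e\sum_{j=1}^N\phi_j$, where each $\phi_j\in\mathcal S$ is Fourier-supported in a tiny ball around a vector $\xi_j$ of size $2^{n_j}$ ($n_1<\cdots<n_N$), with directions chosen so that every pairwise difference $\xi_j-\xi_k$ lies in one fixed dyadic annulus $|\eta|\sim 2^{m_0}$ with $m_0\ll n_1$. Almost orthogonality yields $\|u_0\|_{\dot B^{-1/2}_{4,q}}\sim\e N^{1/q}$. Substituting $(h_L,u_L)$ into the quadratic source produces cross terms whose frequency supports all coincide in the block $\dot\Delta_{m_0}$; for $t\ll 2^{-2m_0}$ the linear semigroup is essentially the identity there, so the $N(N-1)$ contributions add coherently and give $\|u_N(t)\|_{\dot B^{-1/2}_{4,q}}\gtrsim t\e^2 N^2$ independently of $q$. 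Setting $\e=\delta N^{-1/q}$ forces $\|u_0\|_{\dot B^{-1/2}_{4,q}}\le\delta$, while the inflation lower bound becomes $\sim t\delta^2 N^{2-2/q}$; since $2-2/q>1$ when $q>2$, choosing $N$ large and $t$ just below $2^{-2m_0}$ pushes this above $1/\delta$.

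\textbf{Main obstacle.} Controlling the remainder $(h_R,u_R)$ in the working norm is the hard part. Since the $h$-equation carries no diffusion, one has to work in a mixed-type space---such as the weighted Besov framework of \cite{CMZ08,CMZ10}---combining parabolic gain for $u_R$ with transport-type gain for $h_R$, and the coupling term $\nabla\log(1+h)\cdot\nabla u$ forces one first to bound $h$ in $L^\infty$ before the logarithm can be expanded. A further technical point is that $u_0$, although small in $\dot B^{-1/2}_{4,q}$, has $\dot B^{-1/2}_{4,1}$ norm $\sim\e N$, which blows up as $N\to\infty$; accordingly the a priori estimates for $(h_R,u_R)$ must be performed in an auxiliary higher-regularity norm in which the Schwartz data still remain small, and this requires a short-time Schwartz-class solvability result before the Picard expansion can be justified.
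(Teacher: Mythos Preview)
Your resonance picture contains a geometric impossibility. You place the packets at frequencies $\xi_j$ with $|\xi_j|=2^{n_j}$ and $n_1<\cdots<n_N$, and then ask that \emph{every} pairwise difference $\xi_j-\xi_k$ sit in a single dyadic shell $|\eta|\sim 2^{m_0}$ with $m_0\ll n_1$. But the reverse triangle inequality gives $|\xi_j-\xi_k|\ge\big||\xi_j|-|\xi_k|\big|\ge 2^{\max(n_j,n_k)-1}$ whenever $n_j\neq n_k$, so all off-diagonal differences are at least of size $2^{n_1}$, never $2^{m_0}$. Thus the ``$N(N-1)$ cross terms add coherently in block $\dot\Delta_{m_0}$'' step fails, and with it the claimed $N^2\e^2$ scaling. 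You cannot repair this by putting all $\xi_j$ in a single dyadic annulus, because then the $\dot B^{-1/2}_{4,q}$ norm of the data no longer sees the $\ell^q$ index and the $N^{1/q}$ scaling is lost.

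The paper's construction for $q>2$ avoids this by using \emph{diagonal} (self-) interactions rather than cross terms. The data are real, of the form $\varphi_0(x)\sum_{N\le k\le(1+\delta)N}2^{k/2}\big(\sin(2^kx_1),\cos(2^kx_1)\big)$ up to normalization; each mode $k$ carries both $+2^ke_1$ and $-2^ke_1$ frequencies, and the product $\sin(2^kx_1)\partial_{x_1}\cos(2^kx_1)$ drops a piece into block $\dot\Delta_0$. This yields only $N$ coherent contributions (not $N^2$), giving output $\sim N\e^2$ against data $\sim\e N^{1/q}$, i.e.\ an amplification factor $N^{1-2/q}$---still divergent precisely when $q>2$. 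A further subtlety you do not touch is that the second iterate $U_1$ involves both $U_0\cdot\nabla U_0$ and $\nabla\!\int_0^s\!\Div U_0\cdot\nabla U_0$; after combining them the Fourier multiplier becomes $(2e^{-s|\xi-\eta|^2}-1)e^{-s|\eta|^2}$, and the paper extracts a sign by restricting to those $k$ for which $t_02^{2k}$ is large enough that $2e^{-s|\xi-\eta|^2}-1\le -\tfrac14$. Finally, the remainder is controlled not via \cite{CMZ08} directly but through the auxiliary scale $\dot B^\sigma_{[2,\infty],1}$ and the explicit time $t_0=(\ln N)^{-1}2^{-2N}$, with Propositions~\ref{exist}--\ref{minius} providing the needed a priori bounds.
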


\begin{remark}By our method, the condition $q\neq 2$ is sharp, because the second iteration,  a bilinear operator,
\begin{equation}\label{B}
B(f,g):=-\int_0^t e^{(t-s)\Delta}(e^{s\Delta}f\cdot\nabla e^{s\Delta}g+\nabla \int_0^s \Div e^{\tau\Delta}f\dd \tau\cdot\nabla e^{s\Delta}g)\dd s
\end{equation}
satisfies that there exists an absolute constant $C$ such that for any $f,g\in \dot B^{-\frac{1}{2}}_{4,2}(\RR^2)$, and $t\ge 0$,
\[\|B(f,g)(t)\|_{\dot B^{-\frac{1}{2}}_{4,2}}\le C\|f\|_{\dot B^{-\frac{1}{2}}_{4,2}}\|g\|_{\dot B^{-\frac{1}{2}}_{4,2}}.\]
See Appendix for the proof. Therefore, extending our approach to the case $q=2$ would require new ideas and we will consider it later.
\end{remark}

\begin{remark}In \cite{IO21}, the second iteration of compressible Navier-Stokes equations is
\[\tilde{I}[f,g]=-\int_0^te^{c(t-s)\Delta}\{e^{As\Delta} f \nabla e^{Bs\Delta g}+\frac{B}{A}(e^{As\Delta}f-f)\nabla e^{Bs\Delta}g\}ds,\]
which is different from $B(f,g)$. In fact, their methods imply compressible Navier-Stokes equations is ill-posedness in $\dot B^{\frac{d}{p}}_{p,1}\times \dot B^{\frac{d}{p}}_{p,q}$ for $q<2$. We show system \eqref{sw1} is also ill-posed for $q>2$,  which implies that the mechanism between compressible of Navier-Stokes and system \eqref{sw1} is different.
\end{remark}
\section{Preliminaries}
\begin{lemma}[\cite{Ba}]\label{trans}Let $1\le p\le p_1\le\infty$ and $s\in (-2\min\{\frac{1}{p_1}, 1-\frac{1}{p}\}, 1+\frac{2}{p_1}].$ Let $v$ be a vector field such that $\nabla v\in L^1_T(\dot B^{\frac{2}{p_1}}_{p_1, 1}(\RR^2))$. There exists a constant $C$ depending on $p, s, p_1$ such that all solutions $f\in L^\infty_T(\dot B^s_{p,1}(\RR^2))$ of the transport equation
\[\partial_t f+v\cdot\nabla f=g,\,\,\,\,f(0,x)=f_0(x).\]
with initial data $f_0\in \dot B^s_{p,1}(\RR^2)$ and $g\in L^1_T(\dot B^s_{p,1}(\RR^2))$, we have, for $t\in[0, T]$,
\begin{align*}
\|f\|_{L^\infty_T(\dot B^s_{p,1})}\le e^{CV_{p_1}(t)}\Big(\|f_0\|_{\dot B^s_{p,1}}+\int_0^t  e^{-CV_{p_1}(\tau)}\|g(\tau)\|_{\dot B^s_{p,1}}\dd\tau\Big),
\end{align*}
where $V_{p_1}(t)=\int_0^t\|\nabla v\|_{\dot B^{\frac{2}{p_1}}_{p_1,1}(\RR^2)}\dd s$. Particularly, if $\nabla v\in L^1_T(\dot B^{0}_{\infty, 1}\cap \dot B^{\varepsilon}_{\infty, 1})$ for some $\varepsilon>0$, we have
\begin{align*}
\|f\|_{L^\infty_T(\dot B^0_{p,1})}\le e^{CV(t)}\Big(\|f_0\|_{\dot B^0_{p,1}}+\int_0^t  \|g(\tau)\|_{\dot B^0_{p,1}}\dd\tau\Big),
\end{align*}
where $V(t)=\int_0^t\|\nabla v\|_{\dot B^{0}_{\infty,1}}+\|\nabla v\|_{\dot B^{\varepsilon}_{\infty,1}}\dd s.$
\end{lemma}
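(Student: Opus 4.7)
The plan is to run the standard Littlewood--Paley/commutator argument for transport equations in Besov spaces. First I would apply the dyadic block $\dot\Delta_j$ to the equation $\partial_t f+v\cdot\nabla f=g$ to obtain
\[
\partial_t \dot\Delta_j f + v\cdot\nabla \dot\Delta_j f = \dot\Delta_j g + R_j,\qquad R_j:=[v\cdot\nabla,\dot\Delta_j]f,
\]
so each localized piece solves a transport equation driven by $\dot\Delta_j g$ plus the commutator remainder $R_j$. Multiplying by $|\dot\Delta_j f|^{p-2}\dot\Delta_j f$ and integrating in $x$, and using the fact that the convective derivative contributes only $\|\Div v\|_{L^\infty}\|\dot\Delta_j f\|_{L^p}$ after an integration by parts, yields the frequency-localized $L^p$ bound
\[
\|\dot\Delta_j f(t)\|_{L^p}\le \|\dot\Delta_j f_0\|_{L^p}+\int_0^t\bigl(\|\dot\Delta_j g(\tau)\|_{L^p}+\|R_j(\tau)\|_{L^p}+\|\Div v(\tau)\|_{L^\infty}\|\dot\Delta_j f(\tau)\|_{L^p}\bigr)\dd\tau .
\]

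Next I would multiply this inequality by $2^{js}$ and sum in $j\in\ZZ$. The delicate step is controlling the commutator term in $\dot B^s_{p,1}$. Using Bony's paraproduct decomposition one writes
\[
R_j = [T_v,\dot\Delta_j]\cdot\nabla f + T'_{\dot\Delta_j\nabla f}v + (\text{remainder pieces})\,,
\]
and a Bernstein-plus-Hölder analysis gives, for the range $s\in\bigl(-2\min\{1/p_1,1-1/p\},\,1+2/p_1\bigr]$, an estimate of the form
\[
\sum_{j\in\ZZ}2^{js}\|R_j(\tau)\|_{L^p}\le C\,\|\nabla v(\tau)\|_{\dot B^{2/p_1}_{p_1,1}}\,\|f(\tau)\|_{\dot B^s_{p,1}}.
\]
Since $\|\Div v\|_{L^\infty}\lesssim \|\nabla v\|_{\dot B^{2/p_1}_{p_1,1}}$ by the embedding $\dot B^{2/p_1}_{p_1,1}\hookrightarrow L^\infty$, both the transport drift and the commutator contributions combine into the single weight $\|\nabla v\|_{\dot B^{2/p_1}_{p_1,1}}$.

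Summing in $j$ then produces
\[
\|f(t)\|_{\dot B^s_{p,1}}\le \|f_0\|_{\dot B^s_{p,1}}+\int_0^t\|g(\tau)\|_{\dot B^s_{p,1}}\dd\tau+C\int_0^t\|\nabla v(\tau)\|_{\dot B^{2/p_1}_{p_1,1}}\|f(\tau)\|_{\dot B^s_{p,1}}\dd\tau,
\]
so Grönwall's inequality applied with the weight $V_{p_1}$ gives the stated bound. For the special endpoint version with $s=0$, the Bony argument requires slightly more care because $\dot B^0_{\infty,1}$ just fails to embed into Lipschitz; I would handle this by splitting the commutator into a low-high part controlled by $\|\nabla v\|_{\dot B^0_{\infty,1}}$ and a high-low part that needs the extra $\dot B^\eps_{\infty,1}$ regularity to close, which is precisely why $V(t)$ is defined with both norms.

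The main obstacle is the commutator estimate at the boundary of the admissible range of $s$: the upper bound $s\le 1+2/p_1$ comes from controlling the high-frequency paraproduct $T'_{\nabla f}v$, while the lower bound $s>-2\min\{1/p_1,1-1/p\}$ is needed to sum the low-frequency remainder term. Verifying both endpoints (and the $\eps>0$ trick in the $s=0$ variant) is where the real work lies; everything else is bookkeeping.
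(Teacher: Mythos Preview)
Your outline is correct and is exactly the standard Littlewood--Paley/commutator argument from \cite{Ba}; note that the paper does not give its own proof of this lemma but simply cites it, so there is nothing further to compare. One small clarification on the $s=0$ variant: the reason the general estimate does not directly apply is that when $p_1=\infty$ the admissible range $s\in(-2\min\{1/p_1,1-1/p\},1+2/p_1]$ degenerates to $s\in(0,1]$, so $s=0$ is excluded; the extra $\dot B^{\varepsilon}_{\infty,1}$ regularity lets you run the commutator bound at a slightly positive index and then transfer back, which is the content of your ``$\varepsilon>0$ trick''.
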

\begin{lemma}[\cite{Dan05}]\label{heat}Let $s\in \RR$ and $1\le r_1, r_2, p, q\le \infty$ with $r_2\le r_1$. Consider the heat equation
\begin{align*}
\partial_t u-\Delta u=f,\qquad
  u(0,x)=u_0(x).
  \end{align*}
Assume that $u_0\in \dot B^s_{p,q}(\RR^2)$ and $f\in {L}^{r_2}_T(\dot B^{s-2+\frac{2}{r_2}}_{p,q}(\RR^2))$. Then the above equation has a unique solution $u\in {L}^{r_1}_T(\dot B^{s+\frac{2}{r_1}}_{p,q}(\RR^2))$
satisfying
\[\|u\|_{{L}^{r_1}_T(\dot B^{s+\frac{2}{r_1}}_{p,q})(\RR^2)}\le C\big(\|u_0\|_{\dot B^s_{p,q}(\RR^2)}+\|f\|_{{L}^{r_2}_T(\dot B^{s-2+\frac{2}{r_2}}_{p,q}(\RR^2))}\big).\]
\end{lemma}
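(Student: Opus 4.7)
The plan is to establish this heat-equation Besov estimate via a Littlewood--Paley frequency decomposition, reducing the problem to a uniform estimate on each dyadic block and then summing in $\ell^q$. First I apply the homogeneous Littlewood--Paley projector $\dot{\Delta}_j$ to both sides of $\partial_t u-\Delta u=f$, so that $u_j:=\dot{\Delta}_j u$ satisfies the frequency-localized heat equation with data $\dot{\Delta}_j u_0$ and source $\dot{\Delta}_j f$. Duhamel's formula then gives
\[
u_j(t)=e^{t\Delta}\dot{\Delta}_j u_0+\int_0^t e^{(t-s)\Delta}\dot{\Delta}_j f(s)\dd s.
\]

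The key input is the classical smoothing estimate for the heat semigroup on frequency-localized functions: there exist absolute constants $c,C>0$ such that whenever $v$ has Fourier support in an annulus of size $2^j$,
\[
\|e^{t\Delta}v\|_{L^p}\le Ce^{-ct2^{2j}}\|v\|_{L^p}.
\]
This follows by writing the action of $e^{t\Delta}$ on $\dot{\Delta}_j v$ as convolution with $\widetilde{\varphi}_j\ast G_t$, where $G_t$ is the Gaussian kernel and $\widetilde{\varphi}_j$ a slightly enlarged Littlewood--Paley cutoff, then controlling $\|\widetilde{\varphi}_j\ast G_t\|_{L^1}$ by a rescaling argument. Applied to $u_j$, it produces
\[
\|u_j(t)\|_{L^p}\le Ce^{-ct2^{2j}}\|\dot{\Delta}_j u_0\|_{L^p}+C\int_0^t e^{-c(t-s)2^{2j}}\|\dot{\Delta}_j f(s)\|_{L^p}\dd s.
\]
I then take the $L^{r_1}_T$ norm in time. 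The data term contributes $C2^{-2j/r_1}\|\dot{\Delta}_j u_0\|_{L^p}$. For the Duhamel term, which is a convolution in time, I apply Young's convolution inequality with exponents $1+1/r_1=1/\sigma+1/r_2$; the hypothesis $r_2\le r_1$ is precisely what forces $1/\sigma\in[0,1]$, and one computes $\|e^{-ct2^{2j}}\mathbf{1}_{\{t\ge 0\}}\|_{L^\sigma_T}\le C2^{-2j/\sigma}=C2^{-2j(1+1/r_1-1/r_2)}$.

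Multiplying the resulting dyadic bound by $2^{j(s+2/r_1)}$ cancels the negative powers of $2^j$ and produces
\[
2^{j(s+2/r_1)}\|u_j\|_{L^{r_1}_T L^p}\le C2^{js}\|\dot{\Delta}_j u_0\|_{L^p}+C2^{j(s-2+2/r_2)}\|\dot{\Delta}_j f\|_{L^{r_2}_T L^p}.
\]
Taking the $\ell^q$ norm over $j\in\ZZ$ and applying Minkowski's inequality to pass from the Chemin--Lerner-type norm $\widetilde{L}^{r_1}_T(\dot B^{s+2/r_1}_{p,q})$ back to the standard $L^{r_1}_T(\dot B^{s+2/r_1}_{p,q})$ norm yields the stated inequality. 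Existence is obtained by constructing $u$ through the global Duhamel formula and verifying it belongs to the target space, while uniqueness follows from linearity: the difference of two solutions satisfies a zero-data, zero-source heat equation, so each dyadic block vanishes identically. The main technical obstacle is the derivation of the sharp exponential smoothing bound on each dyadic block together with the exponent bookkeeping in Young's inequality that singles out exactly the hypothesis $r_2\le r_1$; a secondary subtlety is the Minkowski step needed to relate the Chemin--Lerner and standard time-Besov norms.
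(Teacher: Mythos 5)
The paper does not prove this lemma at all: it is quoted from Danchin's work \cite{Dan05}, so there is no in-paper argument to compare against. Your proposal is the standard proof of that cited result, and its core is correct: localizing the equation with $\dot\Delta_j$, writing Duhamel's formula, invoking the smoothing estimate $\|e^{t\Delta}\dot\Delta_j v\|_{L^p}\le Ce^{-ct2^{2j}}\|\dot\Delta_j v\|_{L^p}$ for annulus-supported functions, and applying Young's inequality in time with $1+1/r_1=1/\sigma+1/r_2$ (which is exactly where $r_2\le r_1$ is used) gives the dyadic bound $2^{j(s+2/r_1)}\|\dot\Delta_j u\|_{L^{r_1}_TL^p}\le C2^{js}\|\dot\Delta_j u_0\|_{L^p}+C2^{j(s-2+2/r_2)}\|\dot\Delta_j f\|_{L^{r_2}_TL^p}$ with the right exponent bookkeeping.

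The one step you should not wave through is the last one. After taking $\ell^q$ in $j$, what your argument controls is the Chemin--Lerner norm $\widetilde L^{r_1}_T(\dot B^{s+2/r_1}_{p,q})$ of $u$ in terms of $\|u_0\|_{\dot B^s_{p,q}}$ and the Chemin--Lerner norm $\widetilde L^{r_2}_T(\dot B^{s-2+2/r_2}_{p,q})$ of $f$. Minkowski's inequality converts these into the standard mixed norms only in one direction each: $\|u\|_{L^{r_1}_T(\dot B^{s+2/r_1}_{p,q})}\le\|u\|_{\widetilde L^{r_1}_T(\dot B^{s+2/r_1}_{p,q})}$ requires $q\le r_1$, while $\|f\|_{\widetilde L^{r_2}_T(\dot B^{s-2+2/r_2}_{p,q})}\le\|f\|_{L^{r_2}_T(\dot B^{s-2+2/r_2}_{p,q})}$ requires $q\ge r_2$. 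Hence your proof yields the inequality with genuine $L^{r}_T$-in-time Besov norms only when $r_2\le q\le r_1$; in full generality the natural (and, in the cited source, stated) formulation is in the tilde spaces. This caveat is harmless for the present paper, where the lemma is applied with $q=1$ and $r_2=1$, but it should be flagged rather than attributed to Minkowski alone. Your uniqueness remark is fine: the difference of two solutions has each dyadic block solving the free heat equation with zero data in $L^p$, hence vanishing identically.
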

\begin{lemma}[\cite{Ba}]\label{Paralinear}Let $s>0$ and $1\le r,p\le\infty$. Assume $F\in W^{[\sigma]+3}_{\text{loc}}(\RR)$ with $F(0)=0.$ Then for any $f\in L^\infty\cap \dot B^s_{p,1}$, we have
\[\|F(f)\|_{L^r_T(\dot B^s_{p,1})}\le C(1+\|f\|_{L^\infty_T(L^\infty)})^{[\sigma]+2}\|f\|_{L^r_T(\dot B^s_{p,1})}.\]
\end{lemma}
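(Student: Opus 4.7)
I would prove this composition estimate by the classical Littlewood--Paley telescoping argument combined with a Bernstein/commutator analysis, keeping track of how powers of $\|f\|_{L^\infty}$ accumulate through the chain rule.

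The starting point is the identity, valid because $F(0)=0$ and $S_q f\to 0$ at low frequencies in the homogeneous setting,
\[
F(f) \;=\; \sum_{q\in\ZZ}\bigl[F(S_{q+1}f)-F(S_q f)\bigr] \;=\; \sum_{q\in\ZZ} m_q\,\Delta_q f,
\qquad m_q(x):=\int_0^1 F'\bigl(S_q f(x)+t\,\Delta_q f(x)\bigr)\,dt.
\]
Applying $\Delta_j$ and using the quasi-orthogonality of Littlewood--Paley blocks, I would split the sum into two regimes according to whether $q\ge j-N_0$ or $q<j-N_0$, the third range $q>j+N_0$ contributing only a finite band because the spectrum of $m_q\Delta_q f$ is contained in a ball of radius $\lesssim 2^q$.

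In the \emph{diagonal} regime $|j-q|\le N_0$ I would use the direct bound
\[
\|\Delta_j(m_q\Delta_q f)\|_{L^p}\le C\|m_q\|_{L^\infty}\|\Delta_q f\|_{L^p}\le C\,\|F'\|_{L^\infty(-M,M)}\,\|\Delta_q f\|_{L^p},
\]
with $M=\|f\|_{L^\infty}$, and invoke the Sobolev embedding $W^{[\sigma]+3}_{\mathrm{loc}}\hookrightarrow C^{[\sigma]+2}_{\mathrm{loc}}$ to pick up the polynomial factor $C(1+\|f\|_{L^\infty})^{[\sigma]+2}$. In the \emph{off-diagonal} regime $q<j-N_0$ the spectral support of $m_q\Delta_q f$ is essentially $\lesssim 2^q\ll 2^j$, so $\Delta_j(m_q\Delta_q f)$ is small by a high-frequency/Bernstein estimate
\[
\|\Delta_j(m_q\Delta_q f)\|_{L^p}\le C\,2^{-Nj}\bigl\|\nabla^N(m_q\Delta_q f)\bigr\|_{L^p}
\]
for any $N\in\NN$ up to $N=[\sigma]+2$. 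I would control $\|\nabla^N m_q\|_{L^\infty}$ by Fa\`a di Bruno applied to $F'(S_q f+t\Delta_q f)$: each derivative produces at most a factor $2^q$ when falling on $S_qf$ or $\Delta_qf$ (by Bernstein's inequality for low-/high-frequency projections), while the successive $F^{(k)}$ factors (with $k\le[\sigma]+2$, hence available from the hypothesis $F\in W^{[\sigma]+3}_{\mathrm{loc}}$) are uniformly bounded by $C(1+\|f\|_{L^\infty})^{[\sigma]+2}$. This yields the geometric gain $2^{N(q-j)}$ with $N>s$, which is decisive since $s>0$.

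Assembling the two regimes with the weight $2^{js}$ and summing, I obtain a convolution in $j$ against an $\ell^1(\ZZ)$ kernel; Young's inequality in the sequence variable then produces
\[
\sum_{j\in\ZZ} 2^{js}\|\Delta_j F(f)\|_{L^p}\le C\bigl(1+\|f\|_{L^\infty}\bigr)^{[\sigma]+2}\sum_{q\in\ZZ}2^{qs}\|\Delta_q f\|_{L^p},
\]
and taking the $L^r$ norm in time (after pulling the $L^\infty_T(L^\infty)$ factor out of the time integral) gives the claimed bound. The main obstacle, and the step I would treat most carefully, is the Fa\`a di Bruno bookkeeping in the off-diagonal regime: one must verify that the number of derivatives of $F$ actually needed is at most $[\sigma]+2$, that each derivative on $S_q f+t\Delta_q f$ costs exactly one factor of $2^q$ with no loss, and that the resulting decay $2^{N(q-j)}$ survives the $\dot B^s_{p,1}$ summation for every $s>0$ without a logarithmic loss that would force a stronger assumption on $F$.
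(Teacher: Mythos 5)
The paper offers no proof of this lemma --- it is quoted from \cite{Ba} --- and your argument is precisely the standard proof found there: Meyer's first linearization $F(f)=\sum_q m_q\,\Delta_q f$ with $m_q=\int_0^1F'(S_qf+t\Delta_qf)\,dt$, a crude bound for the range $q\ge j-N_0$, Bernstein plus Fa\`a di Bruno for $q<j-N_0$ (which is where the hypotheses $F\in W^{[\sigma]+3}_{\mathrm{loc}}$ and the factor $(1+\|f\|_{L^\infty})^{[\sigma]+2}$ enter), and a trivial treatment of the time variable at the end. One slip to correct: the spectrum of $m_q\Delta_qf$ is \emph{not} contained in a ball of radius $\lesssim 2^q$ (composing with $F'$ destroys band-limitation), so the range $q>j+N_0$ cannot be dismissed on spectral grounds --- but it needs no separate idea, since your crude estimate $\|\Delta_j(m_q\Delta_qf)\|_{L^p}\le C\|F'\|_{L^\infty}\|\Delta_qf\|_{L^p}$ combined with the weight $2^{(j-q)s}$ and $s>0$ already makes the whole sum over $q\ge j-N_0$ geometrically convergent.
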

\begin{lemma}[\cite{IO21}]\label{bilinearest.}Define $\dot B^{\sigma}_{[2,\infty],1}:=\dot B^{\sigma}_{2,1}\cap \dot B^{\sigma}_{\infty,1}$. For $\sigma>0$ and $0<\e<1$, we have
\begin{align*}
&\|uv\|_{\B}\le C\min\{\|u\|_{\dot B^{\e}_{[2,\infty],1}}\|v\|_{\B}, \|u\|_{\B}\|v\|_{\dot B^{\e}_{[2,\infty],1}}\},\\
&\|uv\|_{\dot B^{\sigma}_{[2,\infty],1}}\le C(\|u\|_{\dot B^{\sigma}_{[2,\infty],1}}\|v\|_{\B}+ \|u\|_{\B}\|v\|_{\dot B^{\sigma}_{[2,\infty],1}}).
\end{align*}
\end{lemma}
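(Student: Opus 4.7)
The plan is to apply the Bony paraproduct decomposition $uv = T_u v + T_v u + R(u,v)$ and estimate each piece separately in both $\dot B^{0}_{2,1}(\RR^2)$ and $\dot B^{0}_{\infty,1}(\RR^2)$, whose sum gives the $\B$-norm. By the symmetry of the minimum in the first estimate, it suffices to prove $\|uv\|_{\B}\lesssim \|u\|_{\dot B^{\e}_{[2,\infty],1}}\|v\|_{\B}$. A recurring tool is the embedding $\dot B^{\e}_{[2,\infty],1}\hookrightarrow L^\infty$ in two dimensions, which I verify by splitting the dyadic sum at $q=0$: high-frequency blocks decay geometrically via $2^{-\e q}$ against $\|u\|_{\dot B^{\e}_{\infty,1}}$, while low-frequency blocks are dominated by 2D Bernstein $\|\Delta_q u\|_{L^\infty}\le C2^q\|\Delta_q u\|_{L^2}$ combined with $2^{(1-\e)q}\le 1$ for $q<0$ (requiring $\e<1$), absorbing into $\|u\|_{\dot B^{\e}_{2,1}}$. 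This immediately yields $\|T_u v\|_{\B}\lesssim \|u\|_{L^\infty}\|v\|_{\B}$. For $T_v u$ I estimate $\|S_{q-1}v\|_{L^\infty}\le C 2^{\e q}\|v\|_{\dot B^{-\e}_{\infty,\infty}}$ by summing dyadic blocks and show $\|v\|_{\dot B^{-\e}_{\infty,\infty}}\lesssim\|v\|_{\B}$ by the same Bernstein argument, which after summing in $q$ produces $\|T_v u\|_{\B}\lesssim \|u\|_{\dot B^{\e}_{[2,\infty],1}}\|v\|_{\B}$.

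The main difficulty is the remainder $R(u,v)=\sum_q\Delta_q u\,\widetilde{\Delta}_q v$, because the classical estimate $\|R(u,v)\|_{\dot B^{s_1+s_2}_{p,1}}\lesssim \|u\|_{\dot B^{s_1}_{p_1,1}}\|v\|_{\dot B^{s_2}_{p_2,1}}$ requires $s_1+s_2>0$ and thus fails at the target regularity $s=0$. I bypass this by splitting the target Littlewood-Paley index $j$ at $j=0$. For $j\ge 0$, the inequality $\sum_{j\ge 0}\|\Delta_j R\|_{L^p}\lesssim \|R\|_{\dot B^{\e}_{p,1}}$ holds by $2^{-\e j}$-geometric summability, and the standard remainder estimate with $s_1=\e>0$, $s_2=0$ controls $\|R\|_{\dot B^{\e}_{p,1}}$ by $\|u\|_{\dot B^{\e}_{[2,\infty],1}}\|v\|_{\B}$. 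For $j<0$, two applications of Bernstein in $\RR^2$ yield $\|\Delta_j R\|_{L^\infty}\le C2^j\|\Delta_j R\|_{L^2}$ and $\|\Delta_j R\|_{L^2}\le C 2^j\|\Delta_j R\|_{L^1}\lesssim 2^j\sum_{q\ge j-C}\|\Delta_q u\|_{L^2}\|\widetilde{\Delta}_q v\|_{L^2}$. Exchanging the order of summation, the geometric factors $2^j$ and $2^{2j}$ render the inner $j$-sum finite and reduce the problem to a single $q$-sum, which, after splitting at $q=0$ and using $2^{(1-\e)q}\le 1$ for $q<0$ to transfer a Bernstein factor, is bounded by $\|u\|_{\dot B^{\e}_{2,1}}\|v\|_{\dot B^{0}_{2,1}}$.

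The second estimate in $\dot B^{\sigma}_{[2,\infty],1}$ with $\sigma>0$ follows the same outline but is strictly simpler: the paraproducts continue to obey $\|T_u v\|_{\dot B^{\sigma}_{p,1}}\lesssim\|u\|_{L^\infty}\|v\|_{\dot B^{\sigma}_{p,1}}$ and analogously for $T_v u$, while the remainder now lies in the standard regime $s_1+s_2=\sigma>0$, obviating the Bernstein decomposition entirely. The hardest step overall is therefore the low-frequency part of the remainder in $\dot B^{0}_{\infty,1}$, where the combination of double Bernstein, the split at $q=0$, and the constraint $\e<1$ all come together; this is precisely where the $\dot B^{\e}_{2,1}$ component of the intersection space $\dot B^{\e}_{[2,\infty],1}$ earns its keep.
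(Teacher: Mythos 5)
Your proposal is correct, but note that the paper itself offers no proof of this lemma at all: it is quoted verbatim from the reference [IO21] (Iwabuchi--Ogawa), so the only comparison available is with the standard argument, which is essentially what you give. Your Bony splitting $uv=T_uv+T_vu+R(u,v)$, the embedding $\dot B^{\e}_{[2,\infty],1}\hookrightarrow L^{\infty}$ (low frequencies via the 2D Bernstein inequality $\|\dot\Delta_q u\|_{L^\infty}\lesssim 2^{q}\|\dot\Delta_q u\|_{L^2}$ and $2^{(1-\e)q}\le 1$ for $q<0$, high frequencies via $2^{-\e q}\le 1$), and the bound $\|S_{q-1}v\|_{L^\infty}\lesssim 2^{\e q}\|v\|_{\dot B^{-\e}_{\infty,\infty}}\lesssim 2^{\e q}\|v\|_{\B}$ for $T_vu$ are all sound, and they use the two components of the intersection space exactly where they are needed. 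Your treatment of the remainder at zero regularity is the genuine content: the split of the output frequency at $j=0$, with $j\ge 0$ absorbed into the classical estimate $\|R(u,v)\|_{\dot B^{\e}_{p,1}}\lesssim\|u\|_{\dot B^{\e}_{p,1}}\|v\|_{\dot B^{0}_{\infty,\infty}}$ (valid since $\e>0$), and $j<0$ handled by Bernstein from $L^1$ (and to $L^\infty$), Cauchy--Schwarz on each block, and Fubini in $(j,q)$, is correct; the only points worth writing out are that for $q\ge 0$ the inner $j$-sum is merely bounded (not decaying), so the $q$-summability must come from the weight $2^{\e q}\ge 1$ against $\|u\|_{\dot B^{\e}_{2,1}}$ with $\|\widetilde\Delta_q v\|_{L^2}\le\|v\|_{\dot B^{0}_{2,\infty}}$, and that in the second inequality the factors $\|u\|_{L^\infty}$, $\|v\|_{L^\infty}$ appearing in the paraproduct bounds are controlled through $\dot B^{0}_{\infty,1}\hookrightarrow L^{\infty}$, hence by the $\B$-norms as required. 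With these details made explicit, your argument is a complete, self-contained proof of the quoted lemma.
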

\begin{lemma}[\cite{CM78}]\label{multi}
Let $m$ be a smooth function satisfying that
\[|\partial^\alpha_{\xi,\eta}m(\xi, \eta)|\le C_\alpha(|\xi|+|\eta|)^{-|\alpha|}\]
for all multi-index $\alpha$. Assume $p, p_1, p_2 \in (1, \infty)^3$ and $\frac{1}{p}=\frac{1}{p_1}+\frac{1}{p_2}$, then
\[\Big{\|}\mathscr{F}^{-1}\Big[\int_{\RR^d} m(\xi-\eta, \eta)\widehat{f}(\xi-\eta)\widehat{g}(\eta)\rd \eta\Big]\Big{\|}_{L^p}\le C\|f\|_{L^{p_1}}\|g\|_{L^{p_2}}.\]
\end{lemma}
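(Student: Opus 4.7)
The plan is to follow the classical argument of Coifman--Meyer, combining a Littlewood--Paley decomposition of the symbol $m$ in the joint variable $(\xi,\eta)\in\RR^d\times\RR^d$ with the Fefferman--Stein vector-valued maximal inequality. After the change of variable $\xi' = \xi-\eta$, the operator takes the standard bilinear multiplier form $T_m(f,g)(x)=\iint e^{ix\cdot(\xi'+\eta)}m(\xi',\eta)\widehat{f}(\xi')\widehat{g}(\eta)\dd\xi'\dd\eta$. Using the H\"ormander-type bound $|\partial^\alpha_{\xi,\eta} m(\xi,\eta)| \le C_\alpha(|\xi|+|\eta|)^{-|\alpha|}$, I would introduce a smooth radial partition of unity $\{\varphi_j\}_{j\in\ZZ}$ adapted to the annuli $\{|\xi|+|\eta|\sim 2^j\}$ and decompose $m=\sum_j m_j$. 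Each $m_j$ is supported in such an annulus and, after the rescaling $(\xi,\eta)\mapsto (2^j\xi,2^j\eta)$, reduces to a symbol with uniform $C^\infty$ bounds on the unit shell, so it suffices to control $T_{m_j}$ with constants independent of $j$.

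Next, split each $m_j$ by a conic partition of unity on the unit sphere into three pieces according to whether (i) $|\xi|\ll|\eta|$, (ii) $|\eta|\ll|\xi|$, or (iii) $|\xi|\sim|\eta|$. For the two \emph{paraproduct} pieces (i) and (ii), expand $m_j$ as an absolutely convergent Fourier series on a cube of side $\sim 2^j$; integration by parts using the differential hypothesis gives rapid decay of the Fourier coefficients $c^{(j)}_{k,\ell}$. This reduces $T_{m_j}(f,g)$ to a rapidly convergent sum of tensor products of linear Littlewood--Paley multipliers composed with translations, each of which is pointwise controlled by the Hardy--Littlewood maximal function. Applying H\"older's inequality, the Littlewood--Paley characterization of $L^p$ for $1<p<\infty$, and the Fefferman--Stein vector-valued maximal inequality then yields $\|\sum_j T_{m_j}(f,g)\|_{L^p}\lesssim \|f\|_{L^{p_1}}\|g\|_{L^{p_2}}$ on these two pieces, provided $p_1,p_2\in(1,\infty)$.

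The main obstacle is the \emph{diagonal} piece (iii), where $|\xi|\sim|\eta|\sim 2^j$ but the output frequency $\xi+\eta$ is unconstrained and may lie at scales far below $2^j$; frequency orthogonality between outputs is therefore lost and the naive summation over $j$ could diverge. I would handle this exactly as in Coifman--Meyer, by again expanding $m_j$ into a Fourier series on a cube so that $T_{m_j}(f,g)$ becomes a sum $\sum_{k,\ell}c^{(j)}_{k,\ell}\,(\Delta_j f)(\cdot - 2^{-j}k)\,(\Delta_j g)(\cdot - 2^{-j}\ell)$ with rapidly decaying coefficients. The crucial step is the Cauchy--Schwarz bound
\begin{align*}
\Big|\sum_j T_{m_j}(f,g)(x)\Big|\le C\Big(\sum_j |M(\Delta_j f)(x)|^2\Big)^{1/2}\Big(\sum_j |M(\Delta_j g)(x)|^2\Big)^{1/2},
\end{align*}
where $M$ is the Hardy--Littlewood maximal operator (absorbing the translations), after which H\"older's inequality and the vector-valued Fefferman--Stein inequality close the estimate. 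The hypothesis $p_1,p_2>1$ is essential precisely at this point, since the $L^p$-boundedness of $M$ fails at $p=1$.
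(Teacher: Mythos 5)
The paper does not prove this lemma at all: it is quoted verbatim as a known result with the citation [CM78] (Coifman--Meyer), so there is no internal argument to compare yours against. What you have written is, in outline, the classical Coifman--Meyer proof of exactly that cited theorem, and it is essentially correct: the change of variables $\xi'=\xi-\eta$ does put the operator in the standard bilinear-multiplier form with the symbol evaluated at the frequencies of $f$ and $g$, and the H\"ormander-type bounds are then precisely the hypothesis as stated; the dyadic decomposition of the symbol in $|\xi|+|\eta|$, the rescaling to get $j$-uniform constants, the Fourier-series expansion on cubes with rapidly decaying coefficients, the absorption of translations into the Hardy--Littlewood maximal function, and the Fefferman--Stein inequality are the standard ingredients, with the pointwise Cauchy--Schwarz bound handling the high-high piece where output-frequency orthogonality fails. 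Two small refinements if you were to write this out in full: (i) the hypothesis as stated gives all derivatives, but your argument only needs finitely many (enough to make the Fourier coefficients summable against the polynomial loss $(1+|k|)^d$ from the maximal-function bound for translates), which is worth saying since uniformity in $j$ rests on it; and (ii) the restriction to exponents in $(1,\infty)$ is used not only through the maximal operator, as you state at the end, but also through the Littlewood--Paley characterization of the output space $L^p$ in the two paraproduct pieces, where you resum frequency-localized outputs; for $p\le 1$ that step would require Hardy-space or multilinear Calder\'on--Zygmund machinery, which is why the lemma (and the paper's use of it, with $p=2$ or $4$) stays safely inside the stated range.
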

\section{Ill-Posedness}
In this section, we construct special initial data $(h_0, u_0)$ and obtain ``norm inflation'' of the corresponding solution. In order to show the local existence and uniqueness of system~\eqref{sw1} for  given initial data, firstly we provide the following proposition which involves in some properties we needed later.
\begin{proposition}\label{exist}Fixed some $0<\e<\frac{1}{5}$, let $\delta>0$ such that $5\e+3\delta<1$ and $N$ be a large enough integer , if initial data $(h_0, u_0)\in \B\times\B$ is Schwarz function and satisfies
\[\|h_0\|_{\B}\le C2^{(-\frac{1}{2}+\frac{\delta}{2})N},\,\,\|h_0\|_{\Bo}\le C2^{(\frac{1}{2}+\frac{\delta}{2})N},\,\,\|u_0\|_{\B}\le C2^{(\frac{1}{2}+\frac{\delta}{2})N}\]
 there exist constants $C_0$ and $N_0$ such that for $N>N_0$ and $T=(\ln N)^{-1}2^{-2N}$, system~\eqref{sw1} has a unique local solution $(h, u)$ associated with initial data $(h_0, u_0)$ satisfying
\begin{align*}
&h\in C([0, T], \B)\cap L^{\infty}([0, T], \Bo),\\
&u\in C([0, T], \B)\cap L^{1}([0, T], \Bt),
\end{align*}
and the following estimates hold
\begin{equation}\label{bound}
\begin{aligned}
&\|h\|_{L^\infty_T(\B)}\le 2C^2_0e^{C_0}2^{(-\frac{1}{2}+\frac{\delta}{2})N},\\
&\|h\|_{L^\infty_T(\Bo)}\le 2C^2_02^{(\frac{1}{2}+\frac{\delta}{2})N},\\
&\|u\|_{L^\infty_T(\B)}+\|u\|_{L^1_T(\Bt)}\le C_02^{(\frac{1}{2}+\frac{\delta}{2})N}.\\
\end{aligned}
\end{equation}
\end{proposition}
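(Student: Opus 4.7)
The plan is to set up a Picard iteration adapted to the mixed transport--parabolic structure of \eqref{sw1}, obtaining uniform bounds on $[0,T]$ with $T=(\ln N)^{-1}2^{-2N}$ via the preparatory lemmas of Section~2. Let $(h^0,u^0):=(0,e^{t\Delta}u_0)$ and, for $n\ge 0$, define $h^{n+1}$ as the solution of the linear transport equation
\begin{align*}
\partial_t h^{n+1}+u^n\cdot\nabla h^{n+1}=-h^n\Div u^n,\qquad h^{n+1}\big|_{t=0}=h_0,
\end{align*}
and $u^{n+1}$ as the solution of the linear heat equation
\begin{align*}
\partial_t u^{n+1}-\Delta u^{n+1}=-u^n\cdot\nabla u^n-\nabla h^n+\nabla\ln(1+h^n)\cdot\nabla u^n,\qquad u^{n+1}\big|_{t=0}=u_0.
\end{align*}
I then inductively propagate the three bounds in \eqref{bound}.

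For $u^{n+1}$, Lemma~\ref{heat} with $s=0$, $p\in\{2,\infty\}$, $q=1$ and $(r_1,r_2)\in\{(\infty,1),(1,1)\}$ yields
\begin{align*}
\|u^{n+1}\|_{L^\infty_T\B}+\|u^{n+1}\|_{L^1_T\Bt}\le C\|u_0\|_\B+C\bigl\|u^n\cdot\nabla u^n+\nabla h^n-\nabla\ln(1+h^n)\cdot\nabla u^n\bigr\|_{L^1_T\B}.
\end{align*}
Each bilinear source is controlled via Lemma~\ref{bilinearest.}, the composition $\ln(1+h^n)$ is handled by Lemma~\ref{Paralinear} (applicable because $\|h^n\|_{L^\infty_T L^\infty}\lesssim\|h^n\|_{L^\infty_T\B}\lesssim 2^{(-1/2+\delta/2)N}$ is small, so $1+h^n\ge 1/2$), and $\|\nabla h^n\|_{L^1_T\B}\le T\|h^n\|_{L^\infty_T\Bo}$. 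Each of these contributions comes with a factor of $T$ or $T^{1/2}$ from time integration; combined with the inductive bounds and the choice $T=(\ln N)^{-1}2^{-2N}$, they are strictly smaller than $C\|u_0\|_\B$, yielding the claimed bound with an absolute constant $C_0$.

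For $h^{n+1}$, I apply Lemma~\ref{trans} in $\B$ (via the second formulation) and in $\Bo$ (via the first formulation with a suitable finite $p_1$), using $v=u^n$. The key point is that the transport coefficient $V(T)=\int_0^T(\|\nabla u^n\|_{\dot B^0_{\infty,1}}+\|\nabla u^n\|_{\dot B^\e_{\infty,1}})\dd s$ remains bounded by an absolute constant. For the leading part $u^n\approx e^{t\Delta}u_0$, Lemma~\ref{heat} with $r_1=2$ and $s=0$ gives $\|\nabla e^{t\Delta}u_0\|_{L^2_T\dot B^0_{\infty,1}}\lesssim\|u_0\|_\B$, whence by Cauchy--Schwarz in time
\begin{align*}
\int_0^T\|\nabla e^{t\Delta}u_0\|_{\dot B^0_{\infty,1}}\dd t\lesssim T^{1/2}\|u_0\|_\B\lesssim (\ln N)^{-1/2}\,2^{(-1/2+\delta/2)N}\ll 1,
\end{align*}
and an analogous estimate with $\dot B^\e_{\infty,1}$ uses $r_1=2/(1-\e)$. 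Nonlinear corrections to $u^n-e^{t\Delta}u_0$ are controlled similarly via Lemma~\ref{heat} and contribute at strictly lower order. The source $\|h^n\Div u^n\|_\B$ is bounded by Lemma~\ref{bilinearest.} and integrates to a small contribution, so both transport estimates close with absolute constants; the $e^{C_0}$ prefactor in the $\B$-bound is the exponential from Lemma~\ref{trans} evaluated at $V(T)\le C_0$.

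Convergence of the iteration is shown in a weaker norm such as $L^\infty_T\dot B^{-1}_{\infty,\infty}$ for the differences $(h^{n+1}-h^n,u^{n+1}-u^n)$ via parallel estimates, giving existence in the stated spaces; uniqueness follows by the same type of stability argument applied to the difference of two solutions. The principal difficulty is the bookkeeping: one must balance the $N$-dependent powers $2^{(\pm 1/2+\delta/2)N}$ across all three inductive bounds so that every bilinear source is absorbed into a linear contribution, and this is precisely what the time scale $T=(\ln N)^{-1}2^{-2N}$ is engineered to achieve through the parabolic gain $T^{1/2}2^N\sim (\ln N)^{-1/2}$.
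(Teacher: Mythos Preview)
Your iteration scheme for $h^{n+1}$ is wrong: the first equation of \eqref{sw1} reads $\partial_t h+u\cdot\nabla h=-\Div u-h\Div u$, so the right-hand side of the linear transport problem must be $-(1+h^n)\Div u^n$, not $-h^n\Div u^n$. You have dropped the principal forcing term $-\Div u^n$. With your scheme, any limit $(\tilde h,u)$ of the iterates would satisfy $\partial_t\tilde h+u\cdot\nabla\tilde h=-\tilde h\Div u$, which is \emph{not} \eqref{sw1}; in particular, for the application with $h_0=0$ your scheme yields $h^n\equiv 0$ for all $n$, while the true $h$ is nontrivial because $\Div u\not\equiv 0$. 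Your subsequent estimates reflect the same omission: you only bound $\|h^n\Div u^n\|_{L^1_T\B}$, never $\|\Div u^n\|_{L^1_T\B}$. The latter is in fact the dominant term in the $\B$-estimate for $h$ and is exactly what produces the size $2^{(-\frac12+\frac\delta2)N}$ in \eqref{bound}; once you restore it, it is handled by $\|\Div u^n\|_{L^1_T\B}\le T^{1/2}\|u^n\|_{L^2_T\Bo}\lesssim (\ln N)^{-1/2}2^{(-\frac12+\frac\delta2)N}$, and similarly $\|\Div u^n\|_{L^1_T\Bo}\le\|u^n\|_{L^1_T\Bt}$ for the $\Bo$-estimate.

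Apart from this gap, your outline matches the paper's proof: the same linearized iteration, the same use of Lemmas~\ref{trans}--\ref{bilinearest.} for the uniform bounds, and the same mechanism $T^{1/2}2^{N}\sim(\ln N)^{-1/2}$ to absorb the nonlinear terms. One genuine methodological difference is the passage to the limit: you propose a contraction argument on the differences $(h^{n+1}-h^n,u^{n+1}-u^n)$ in a weaker topology, whereas the paper instead bounds the time derivatives uniformly in $L^2_T\B\times L^{2/(2-\e)}_T\dot B^{-\e}_{[2,\infty],1}$ and extracts a convergent subsequence by Aubin--Lions and a Cantor diagonal argument. Both routes are standard for this type of system; the compactness route avoids having to estimate the transport equation for $h^{n+1}-h^n$ at negative regularity.
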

\begin{proof}
\vskip 0.3mm
\textbf{First Step: Constructing Approximate Solutions}\\
Starting from $(h^0, {u}^0)=(0,0)$ and we define the approximate sequence $(\hn, \un)_{n\in\NN}$ of equations \eqref{sw1} by  solving the following linear system:
\begin{equation}\label{swn}
\left\{ \aligned
    & \partial_t \h+ \un\cdot\nabla\h=-(1+\hn)\Div \un, &t>0, x\in \RR^2,\\
     &\partial_t\uu-\Delta\uu=-\un\cdot\nabla\un-\nabla\hn+\nabla(\ln(1+\hn))\cdot\nabla\un, &t>0, x\in \RR^2,\\
     &\h(0, x)=h_0,\uu(0, x)={u}_0(x), &x\in \RR^2.
\endaligned
\right.
\end{equation}
\noindent\textbf{Second Step: Uniform Bounds}

It is easy to check that  $(h^1, u^1)=(h_0, e^{t\Delta}u_0)$ and $h_1$ satisfies  estimates in \eqref{bound}. For $u_1$, there exists a constant ~$C_0>1$ such that

\begin{align*}
&\|u^1\|_{L^\infty_T(\B)}+\|u^1\|_{L^1_T(\Bt)}\le C\|u_0\|_{\B}\le {C_0}2^{(\frac{1}{2}+\frac{\delta}{2})N}.
\end{align*}
Assume estimates \eqref{bound} hold for $(\hn, \un)$, next we check it for $(\h, \uu)$. With aid of Lemma \ref{trans} and Lemma \ref{bilinearest.}, we obtain that
\begin{align*}
\|\h\|_{L^\infty_T(\B)}\le e^{C\|\un\|_{L^1_T(\dot B^1_{\infty, 1}\cap \dot B^{1+\e}_{\infty, 1})}}\int_0^T(\|\hn\|_{\B}\|\un\|_{ \dot B^{1+\e}_{[2,\infty],1}}+\|\un\|_{\Bo})\dd t.
\end{align*}
Noting the fact that
\begin{align*}
\|\un\|_{L^1_T(\dot B^1_{\infty, 1}\cap \dot B^{1+\e}_{\infty, 1})}\le &T^{\frac{1}{2}}\|\un\|_{L^2_T(\dot B^1_{\infty, 1})}+T^{\frac{1-\e}{2}}\|\un\|_{L^{\frac{2}{1+\e}}_T(\dot B^{1+\e}_{\infty, 1})}\\
\le&  C_02^{(\frac{1}{2}+\frac{\delta}{2})N}\cdot (\ln N)^{-\frac{1}{2}} 2^{-N}+C_02^{(\frac{1}{2}+\frac{\delta}{2})N}\cdot (\ln N)^{-\frac{1-\e}{2}} 2^{(-1+\e)N}\\
\le& 2C_0(\ln N)^{-\frac{1-\e}{2}}2^{(-\frac{1}{2}+\frac{\delta}{2}+\e)N},
\end{align*}
and
\begin{align*}
\int_0^T\|\hn\|_{\B}\|\un\|_{\dot B^{1+\e}_{[2,\infty],1}}\dd t
\le& T^{\frac{1-\e}{2}}\|\hn\|_{L^\infty_T(\B)}\|\un\|_{L^{\frac{2}{1+\e}}_T\dot B^{1+\e}_{[2,\infty],1}}\\
\le & 2 C^3_0e^{C_0}(\ln N)^{-\frac{1-\e}{2}}2^{({\delta+\e-1})N},
\end{align*}
it yields that for $3\delta+5\e<1$, there exists a $N_0$ such that for $N>N_0$,
\begin{align*}
\|\h\|_{L^\infty_T(\B)}\le &e^{ C_02^{(-\frac{1}{2}+\frac{\delta}{2}+\e)N}}(2 C^3_0e^{C_0}(\ln N)^{-\frac{1-\e}{2}}2^{({\delta+\e-1})N}+C_0(\ln N)^{-\frac{1}{2}}2^{(-\frac{1}{2}+\frac{\delta}{2})N})\\
\le&2C^2_0e^{C_0}2^{(-\frac{1}{2}+\frac{\delta}{2})N}.
\end{align*}
Similarly, for $3\delta+5\e<1$,  $N>N_0$ and $C_0>C$, we have
\begin{equation}\label{hB1}
\begin{aligned}
&\|\h\|_{L^\infty_T(\Bo)}\\
\le& e^{C\|\un\|_{L^1_T(\dot B^1_{\infty, 1})}}\int_0^T(\|\hn\|_{\Bo}\|\un\|_{\Bo}+\|\un\|_{\Bt}+\|\hn\|_{\B}\|\un\|_{\Bt})\dd t\\
\le& e^{C_02^{(-\frac{1}{2}+\frac{\delta}{2})N}}(2 C^3_0(\ln N)^{-\frac{1}{2}}2^{(\delta-1)N}+C_02^{(\frac{1}{2}+\frac{\delta}{2})N}+2C_0^3e^{C_0}2^{\delta N})
\le C^2_02^{(\frac{1}{2}+\frac{\delta}{2})N}.
\end{aligned}
\end{equation}
Using {Lemma} \ref{heat}, we obtain that
\begin{align*}
&\|\uu\|_{L^\infty_T(\B)}+\|\uu\|_{L^1_T(\Bt)}\\
\le& \|u_0\|_{\B}+\|\un\cdot\nabla\un\|_{L^1_T(\B)}+\|\nabla\hn\|_{L^1_T(\B)}+\|\nabla(\ln(1+\hn))\cdot\nabla\un\|_{L^1_T(\B)},
\end{align*}
where for $N>N_0$, it holds that
\begin{align*}
\|\un\cdot\nabla\un\|_{L^1_T(\B)}\le &T^{\frac{1-\e}{2}}\|\un\|_{L^{\frac{2}{\e}}_T(\dot B^\e_{[2,\infty], 1})}\|\un\|_{L^2_T(\Bo)}\le C^2_0 2^{(\delta+\e)N}\le 2C^2_0(\ln N)^{-\frac{1-\e}{2}}2^{(\e+\delta)N},
\end{align*}
and
\begin{align*}
\|\nabla\hn\|_{L^1_T(\B)}\le T\|\hn\|_{L^\infty_T(\Bo)}\le 2C^2_0(\ln N)^{-1}2^{(\frac{\delta}{2}-\frac{3}{2})N}.
\end{align*}
Owning to Lemma \ref{Paralinear}, one yields that
\begin{align*}
\|\nabla(\ln(1+\hn))\cdot\nabla\un\|_{L^1_T(\B)}\le& T^{\frac{1-\e}{2}}\|\ln(1+\hn)\|_{L^\infty_T(\Bo)}\|\un\|_{L^{\frac{2}{1+\e}}_T(\dot B^{1+\e}_{[2,\infty], 1})}\\
\le& 2CC^3_0(1+\|\hn\|_{L^\infty_T(L^\infty)})^2(\ln N)^{-\frac{1-\e}{2}}2^{(\delta+\e)N}\\
\le &8CC^3_0(\ln N)^{-\frac{1-\e}{2}}2^{(\delta+\e)N}.
\end{align*}
Therefore, for some fixed $C_0>1$, we can find some $N_0$ such that for any $N\ge N_0$, $(\hn, \un)_{n\in\NN}$ satisfies uniformly estimates \eqref{bound}.
\vskip 0.3mm
\noindent \textbf{Third Step: Time Derivatives}
\vskip 0.3mm
Furthermore, the sequence $(h^n, u^n)_{n\ge 0}$ is uniformly bounded in
\[C^{\frac{1}{2}}([0, T]; B^{0}_{[2, \infty],1})\times C^{\frac{2-\varepsilon}{2}}([0, T]; B^{-\varepsilon}_{[2, \infty],1}).\]
Indeed, $(\hn, \un)_{n\ge 0}$ possesses uniformly bounds  \eqref{bound} and
\[\partial_th^{n+1}=-u^n\cdot\nabla h^{n+1}-(1+h^n)\Div u^n,\]
the right-hand side is uniformly bounded in $L^2_T(B^0_{[2, \infty], 1})$.

As regards to $(u^n)_{n\ge 0}$, this follows from the fact that
\[\partial_t\uu=\Delta\uu-\un\cdot\nabla\un-\nabla\hn+\nabla(\ln(1+\hn))\cdot\nabla\un.\]
by using the fact that $(\un)_{n\ge 0}$ and $(\hn)_{n\ge 0}$ are uniformly bounded in $L^\infty_T (\B)\cap L^1_T(\Bt)$ and $ L^\infty_T (\B\cap\Bo)$, we easily deduce that the four terms on the right-hande side are in ${L^{\frac{2}{2-\varepsilon}}_T( B^{-\varepsilon}_{[2, \infty],1})}.$
\vskip 0.3mm
\noindent \textbf{Fourth Step: Convergence and Uniqueness}
\vskip 0.3mm
 Let $\{\phi_m\}_{m\in\NN}$ be a sequence of smooth functions with values in $[0, 1]$, supported in the ball $B(0, m+1)$ and equal to $1$ on $B(0, m)$. Taking advantage the uniformly estimates on $(\hn, \un)_{n\ge 0}$, by Aubin-Lions lemma and the Cantor diagonal process, we obtain  that there exists  a subsequence of $(\hn, \un)_{n\ge 0}$ (still denoted by $(\hn, \un)_{n\ge 0}$) such that, for all $m\in\NN$,
\begin{equation}\label{cov2}
 \begin{aligned}
(\phi_m \hn, \phi_m \un)\rightarrow (\phi_m h, \phi_m u) \,\,\text{in}\,\, C([0, T]; B^{0}_{[2, \infty],1}\times B^{-\varepsilon}_{[2, \infty],1}).
 \end{aligned}
 \end{equation}
 Therefore, $(\hn, \un)$ tends to $(h, u)$ in $\mathcal{D}'([0, T]\times\RR^2)$. Following the argument in \cite{Dan00},   it is routine to verify that $(h ,  {u})$ satisfies  system \eqref{sw1} and the solution is continuous in terms of time in $\B(\RR^2)\times\B(\RR^2)$. Readers can refer to \cite{CMZ08} to prove the uniqueness.
\end{proof}

\begin{proposition}\label{minius}
Let $\e$ and $\delta$ be defined in Proposition \ref{exist}. The solution $(h, u)$ obtained in Proposition \ref{exist} satisfies the following estimates:
\begin{equation}
\|h+\int_0^t\Div U_0\dd s\|_{L^\infty_T(\dot B^1_{[2,\infty],1})}\le C2^{{(\e+\delta-1)N}}
\end{equation}
\begin{equation}\label{u-u0}
\|u-U_0\|_{L^\infty_T (\dot B^0_{[2,\infty],1})}+\|u-U_0\|_{L^1_T (\dot B^2_{[2,\infty],1})}
\le C2^{(\varepsilon+\delta)N}.
\end{equation}
\end{proposition}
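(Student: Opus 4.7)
The natural plan is to track the nonlinear remainders
\[ v := u-U_0,\qquad \widetilde h := h+\int_0^t\Div U_0\,ds,\]
where $U_0(t) := e^{t\Delta}u_0$ denotes the free heat flow of the initial velocity. Under the assumption $h_0=0$ coming from the main theorem's initial data, one has $v(0)=\widetilde h(0)=0$. Substituting into system \eqref{sw1} and subtracting the linear flows yields the coupled pair
\begin{align*}
\partial_t\widetilde h+u\cdot\nabla\widetilde h &= -\Div v - h\Div u + u\cdot\nabla\!\int_0^t\!\Div U_0\,ds,\\
\partial_t v-\Delta v &= -U_0\cdot\nabla u - v\cdot\nabla u - \nabla\widetilde h + \nabla\!\int_0^t\!\Div U_0\,ds + \nabla\ln(1+h)\cdot\nabla u.
\end{align*}
I would bound $v$ by the maximal-regularity estimate of Lemma \ref{heat} (with $s=0$, $r_1=\infty$, $r_2=1$, $p\in\{2,\infty\}$), and $\widetilde h$ by the transport estimate of Lemma \ref{trans} (with $s=1$, $p\in\{2,\infty\}$, $p_1=\infty$).

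The transport prefactor $\exp(CV_\infty(T))$ is harmless because $V_\infty(T)=\|\nabla u\|_{L^1_T(\dot B^0_{\infty,1})}$ was already shown to be $o(1)$ inside the proof of Proposition \ref{exist}. Each nonlinear source is then estimated by combining Lemma \ref{bilinearest.} with a H\"older-in-time split of the type
\[ \|U_0\cdot\nabla U_0\|_{L^1_T(\B)}\le T^{(1-\e)/2}\|U_0\|_{L^{2/\e}_T(\dot B^\e_{[2,\infty],1})}\|U_0\|_{L^2_T(\Bo)}\lesssim T^{(1-\e)/2}\|u_0\|_\B^2\lesssim 2^{(\e+\delta)N},\]
in strict parallel with the treatment of $\un\cdot\nabla\un$ in Proposition \ref{exist}. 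The composition $\nabla\ln(1+h)\cdot\nabla u$ is dealt with via Lemma \ref{Paralinear} together with the $L^\infty_T(L^\infty)$ control on $h$ supplied by \eqref{bound}. The linear-in-$u_0$ corrector is handled through the explicit identity $\int_0^t\Div U_0\,ds=\Delta^{-1}(e^{t\Delta}-I)\Div u_0$, which is one derivative smoother than $U_0$ and hence negligible in all relevant products.

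The two estimates close a coupled bootstrap: $\nabla\widetilde h$ enters the $v$-equation through $\|\widetilde h\|_{L^1_T(\Bo)}\le T\|\widetilde h\|_{L^\infty_T(\Bo)}$, while $\Div v$ and $h\Div u$ enter the $\widetilde h$-equation. All coupling constants carry at least one factor of $T=(\ln N)^{-1}2^{-2N}$, which is comfortably absorbed for $N$ sufficiently large.

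The principal technical difficulty I foresee is producing the one-derivative gap between the two target scalings: $v$ is allowed to be of size $2^{(\e+\delta)N}$, whereas $\widetilde h$ must be $2^{-N}$ smaller in its norm. A direct bilinear bound on $h\Div u$ via Lemma \ref{bilinearest.} and \eqref{bound} only delivers $2^{\delta N}$, which is too coarse. The remedy is to exploit the leading-order cancellation $h\approx-\int_0^t\Div U_0\,ds$ by writing $h=\widetilde h-\int_0^t\Div U_0\,ds$ inside this quadratic term. The piece $\widetilde h\Div u$ is absorbed into the left-hand side through the $T$-smallness combined with the inductive bound on $\widetilde h$, while the remaining piece $(\int_0^t\Div U_0\,ds)\Div u$ is purely linear-in-$u_0$ on each factor and, at the spectrally active frequency $2^N$, gains from the time-integrated heat semigroup precisely the missing $2^{-N}$ factor. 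The same mechanism controls the term $u\cdot\nabla\int_0^t\Div U_0\,ds$.
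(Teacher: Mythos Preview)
Your plan diverges from the paper's in a way that creates work where none is needed. The paper's argument is \emph{sequential}, not coupled: it first proves \eqref{u-u0} using only the a priori bounds \eqref{bound}, and only then deduces the $\widetilde h$ estimate from it.

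For $v=u-U_0$ the paper applies Lemma~\ref{heat} to
\[
\partial_t v-\Delta v = -u\cdot\nabla u-\nabla h+\nabla\ln(1+h)\cdot\nabla u,
\]
keeping the full $h$ in the pressure term. The crude bound $\|\nabla h\|_{L^1_T(\B)}\le T\|h\|_{L^\infty_T(\Bo)}\lesssim 2^{(-\frac32+\frac\delta2)N}$ is already far smaller than $2^{(\e+\delta)N}$, so there is no reason to introduce $\widetilde h$ here. This gives \eqref{u-u0} immediately, without any bootstrap.

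For $\widetilde h$ the paper does not use the transport Lemma~\ref{trans} at all. It rewrites the continuity equation in conservative form $\partial_t h+\Div u+\Div(hu)=0$, integrates in time (with $h_0=0$), and obtains
\[
\widetilde h = -\int_0^t\Div(u-U_0)\,ds-\int_0^t\Div(hu)\,ds,
\]
from which the estimate follows by the just-established bound on $u-U_0$ together with \eqref{bound}. Note also that the paper's proof actually controls $\widetilde h$ in $L^\infty_T(\B)$, not $L^\infty_T(\Bo)$ as printed in the statement; the $\Bo$ appears to be a typo, and only the $\B$ (indeed $\dot B^\e_{[2,\infty],1}$) control is invoked later in the $U_2$ estimate.

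This matters for your ``principal technical difficulty''. That difficulty is entirely an artifact of (i) reading the typo literally and aiming at $\Bo$, and (ii) separating $u\cdot\nabla h$ from $h\Div u$ via the transport formulation instead of combining them into $\Div(hu)$. Moreover, your proposed remedy does not close at the $\Bo$ level: even after splitting $h=\widetilde h-\int_0^t\Div U_0\,ds$, the piece $\bigl(\int_0^t\Div U_0\,ds\bigr)\Div u$ is \emph{not} linear in $u_0$ in the second factor, and a direct application of Lemma~\ref{bilinearest.} with \eqref{bound} still yields only $\|(\int_0^t\Div U_0)\Div u\|_{L^1_T(\Bo)}\lesssim 2^{\delta N}$, with no extra $2^{-N}$ gain. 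At the correct target regularity $\B$ this issue simply does not arise.
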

\begin{proof}Taking advantage of Lemma \ref{heat} and then we obtain
\begin{align*}
&\|u-U_0\|_{L^\infty_T (\dot B^0_{[2,\infty],1})}+\|u-U_0\|_{L^1_T (\dot B^2_{[2,\infty],1})}\\
\le& C(\|u\cdot\nabla u\|_{L^1_T (\dot B^0_{[2,\infty],1})}+\|\nabla h\|_{L^1_T (\dot B^0_{[2,\infty],1})}+\|\nabla(\ln(1+h))\cdot\nabla u\|_{L^1_T(\dot B^0_{[2,\infty],1})})\\
\le& CT^{\frac{1-\varepsilon}{2}}\|u\|_{L^\infty_T(\dot B^0_{[2,\infty],1})}\| u\|_{L^{\frac{2}{1+\varepsilon}}_T (\dot B^{1+\varepsilon}_{[2,\infty],1})}+CT\|h\|_{L^\infty_T (\dot B^1_{[2,\infty],1})}\\
&+CT^{\frac{1-\varepsilon}{2}}(1+\|h\|_{L^\infty})^2\|h\|_{L^\infty_T(\dot B^1_{[2,\infty],1})}\| u\|_{L^{\frac{2}{1+\varepsilon}}_T (\dot B^{1+\varepsilon}_{[2,\infty],1})}\\
\le& C2^{(\varepsilon+\delta)N}.
\end{align*}
Based on the above inequality, it holds that
\begin{align*}
\|h+\int_0^t\Div U_0\dd s\|_{L^\infty_T(\dot B^0_{[2,\infty],1})}
\le&\big\|\int_0^t(\Div(u-U_0)+\Div(hu))\dd s\big\|_{L^\infty_T(\dot B^0_{[2,\infty],1})}\\
\le&\|u-U_0\|_{L^1_T(\dot B^1_{[2,\infty],1})}+\|hu\|_{L^1_T(\dot B^1_{[2,\infty],1})}
\le C2^{{(\e+\delta-1)N}}.
\end{align*}
\end{proof}
\subsection{Proof of Theorem \ref{thm} for \bm{$1\le q< 2$}.}
Let $(\varphi_j)_{j\in\ZZ}$ be the Littlewood-Paley convolution functions. We introduce
\[\Phi_{j,N}=\varphi_0(2^j(x-2^{|j|+2N}e_1)),\]
and initial data $(h_0, u_0)$ is defined by:
\[h_0=0,\quad u_0=\Big(\frac{ 2^{\frac{N}{2}}}{N^{\frac{1}{4}}\ln N}\sum_{-\delta N\le j\le 0}2^{\frac{1}{2}j}\Phi_{j,N}\sin (2^N x_1), \frac{ 2^{\frac{N}{2}}}{N^{\frac{1}{4}}\ln N}\sum_{-\delta N\le j\le 0}2^{\frac{1}{2}j}\Phi_{j,N}\cos (2^N x_1)\Big),\]
where $\delta$ is consistent with that in Proposition \ref{exist}. It is easy to check that supp $\widehat{u}_0(\xi)\subset\{\xi\in\RR^2|2^{N-1}\le|\xi|\le 2^{N+1}\}.$ Hence
\begin{align*}
\|u_0\|_{\dot B^{-\frac{1}{2}}_{4,q}}\le& \frac{C}{N^{\frac{1}{4}}\ln N}\Big\|\sum_{-\delta N\le j\le 0}2^{\frac{1}{2}j}\Phi_{j,N}\Big\|_{L^4}.
\end{align*}
An easy computation yields that
\begin{align*}
&\Big\|\sum_{-\delta N\le j\le 0}2^{\frac{1}{2}j}\Phi_{j,N}\Big\|^4_{L^4}
\le \sum_{-\delta N\le j\le 0}\int_{\RR^2}2^{2j}|\Phi_{j,N}|^4\dd x+\sum_{(j_1,\cdots, j_{4})\in \Lambda}\int_{\RR^2}\Big|2^{\frac{1}{2}{(j_1+\dots+j_{4})}} \Phi_{j_1,N}\cdots\Phi_{j_4,N}\Big|\dd x,
\end{align*}
where the set $\Lambda$ is defined by
\[\Lambda=\big\{(j_1,\dots, j_{4})\in [-N, 0]^{4}\cap\NN^{4}\vert \exists 1\le k, \ell\le 4 \,\,\text{s.t.}-N\le j_{k}\neq j_{\ell}\le 0 \big\}.\]
From the definition of $\Phi_{j,N}$, we obtain that
\begin{equation}\label{Phij}
\begin{aligned}
\sum_{-\delta N\le j\le 0}\int_{\RR^2}2^{2j}|\Phi_{j,N}|^4\dd x=\sum_{-\delta N\le j\le 0}\int_{\RR^2}|\varphi_0|^4\dd x\le CN.
\end{aligned}
\end{equation}
Due to $\Phi_{j, N}\in\mathcal{S}$, for any $k>0$, there exists a constant $C_k$ such that
$$|\Phi_{j, N}|\le C_k(1+2^j|x-2^{|j|+2N}e_1|)^{-k}.$$
Assume $j_1\neq j_2$,  from the above inequality, we have that
\begin{align*}
&\int_{\RR^d}\Big|\Phi_{j_1,N}\cdots\Phi_{j_4,N}\Big|\dd x\\
\le&C_k\int_{\RR^d}\frac{1}{(1+2^{j_1}|x-2^{|j_1|+2N}e_1|)^k}
\frac{1}{(1+2^{j_2}|x-2^{|j_2|+2N}e_1|)^k}\dd x\\
\le&C_k\Big(\int_{|x-2^{|j_1|+2N}e_1|\le\frac{1}{2}2^{2N}}\frac{1}{(1+2^{j_1}|x-2^{|j_1|+2N}e_1|)^k}
\frac{1}{(1+2^{j_2}|x-2^{|j_2|+2N}e_1|)^k}\dd x\\
&+\int_{|x-2^{|j_1|+2N}e_1|>\frac{1}{2}2^{2N}}\frac{1}{(1+2^{j_1}|x-2^{|j_1|+2N}e_1|)^k}
\frac{1}{(1+2^{j_2}|x-2^{|j_2|+2N}e_1|)^k}\dd x\Big).
\end{align*}
Noting the fact that $||j_1|-|j_2||\ge 1$, for $|x-2^{|j_1|+2N}e_1|\le\frac{1}{2}2^{2N}$, by triangle inequality we obtain that
\begin{align*}
2^{j_2}|x-2^{|j_2|+2N}e_1|
\ge&2^{j_2}|2^{|j_1|+2N}e_1-2^{|j_2|+2N}e_1|-\frac{1}{2}2^{j_2}2^{2N}\\
\ge&2^{j_2}2^{2N}-\frac{1}{2}2^{j_2}2^{2N}\ge\frac{1}{2}2^{j_2}2^{2N}.
\end{align*}
Therefore, taking $k>2$, one yields that
\begin{align*}
&\int_{|x-2^{|j_1|+2N}e_1|\le\frac{1}{2}2^{2N}}\frac{1}{(1+2^{j_1}|x-2^{|j_1|+2N}e_1|)^k}
\frac{1}{(1+2^{j_2}|x-2^{|j_2|+2N}e_1|)^k}\dd x\\
\le &C2^{-(j_2+2N-1)k}\int_{|x-2^{|j_1|+2N}e_1|\le\frac{1}{2}2^{2N}}\frac{1}{(1+2^{j_1}|x-2^{|j_1|+2N}e_1|)^k}\dd x\\
\le&C2^{-(j_2+2N-1)k}2^{-2j_1}\le C.
\end{align*}
Similarly, we have
\begin{align*}
&\int_{|x-2^{|j_1|+2N}e_1|>\frac{1}{2}2^{2N}}\frac{1}{(1+2^{j_1}|x-2^{|j_1|+2N}e_1|)^k}
\frac{1}{(1+2^{j_2}|x-2^{|j_2|+2N}e_1|)^k}\dd x\\
\le& C2^{-(j_1+2N-1)k}2^{-2j_2}\le C.
\end{align*}
Hence, by \eqref{Phij} and the above two inequalities, one gets
\begin{align*}
\|u_0\|_{\dot B^{-\frac{1}{2}}_{4,q}}\le \frac{C}{N^{\frac{1}{4}}\ln N}(N+\sum_{(j_1,\cdots, j_{4})\in \Lambda}2^{\frac{1}{2}{(j_1+\dots+j_{4})}} )^{\frac{1}{4}}\le \frac{C}{\ln N}.
\end{align*}
Now we decompose the solution $u$ into three parts:
\begin{equation}\label{decomposition}
u=U_0+U_1+U_2,
\end{equation}
where $U_0=e^{t\Delta}u_0$, and\\
\begin{align*}
U_1:=-\int_0^t e^{(t-s)\Delta}(U_0\cdot\nabla U_0+\nabla \int_0^s \Div U_0(\tau)\dd \tau\cdot\nabla U_0(s)-\nabla h_0\cdot\nabla e^{s\Delta}u_0)\dd s.
\end{align*}
Next, we estimate $\|U_i(t_0)\|_{\dot B^{-\frac{1}{2}}_{4, 1}}(i=0,1,2)$ respectively, where $t_0=(\ln N)^{-1}2^{-2N}$.

\noindent\emph{{Estimates on $\|U_0(t_0)\|_{\dot B^{-\frac{1}{2}}_{4, 1}}$}}.\,\, From the above estimates on $\|u_0\|_{\dot B^{-\frac{1}{2}}_{4,q}}$, it yields that
\begin{equation}\label{U_0}
\|U_0(t_0)\|_{\dot B^{-\frac{1}{2}}_{4, q}}\le C\|u_0\|_{\dot B^{-\frac{1}{2}}_{4, q}}\le \frac{C}{\ln N}.
\end{equation}
We denote the $k-th$ component of $U_1$ by $U^{(k)}_1$, then
\begin{align*}
U^{(2)}_1=-\int_0^t e^{(t-s)\Delta}(U_0(s)\cdot\nabla U^{(2)}_0(s)+\nabla \int_0^s \Div U_0(\tau)\dd \tau\cdot\nabla U^{(2)}_0(s))\dd s.
\end{align*}
\emph{{Estimates on $\|U_1(t_0)\|_{\dot B^{-\frac{1}{2}}_{4, q}}$}}.\,\, By the definition of $U_1$ and $h_0=0$, we obtain that
\begin{equation}\label{U_1}
\begin{aligned}
&\|U_1(t_0)\|_{\dot B^{-\frac{1}{2}}_{4, q}}\ge \|U^{(2)}_1(t_0)\|_{\dot B^{-\frac{1}{2}}_{4, q}}\ge \Big(\sum_{-\delta N\le j\le 0}2^{-\frac{1}{2}jq}\|\dot\Delta_jU^{(2)}_1(t_0)\|^q_{L^4}\Big)^{\frac{1}{q}}\\
\ge&\Big(\sum_{-\delta N\le j\le 0}2^{-\frac{1}{2}jq}\Big{\|}\dot\Delta_j\int_0^t e^{(t-s)\Delta}U_0\cdot\nabla U^{(2)}_0\dd s\Big{\|}^q_{L^4}\Big)^{\frac{1}{q}}\\
&-\Big(\sum_{-\delta N\le j\le 0}2^{-\frac{1}{2}jq}\Big{\|}\dot\Delta_j\int_0^t e^{(t-s)\Delta}\nabla \int_0^s \Div U_0(\tau)\dd \tau\cdot\nabla U^{(2)}_0(s)\dd s\Big{\|}^q_{L^4}\Big)^{\frac{1}{q}}\\
\ge& \Big(\sum_{-\delta N\le j\le 0}2^{-\frac{1}{2}jq}\big\|\dot\Delta_j\int_0^{t_0} e^{(t_0-s)\Delta}(U^1_0\partial_{x_1} U^2_0)\dd s\big\|^q_{L^4}\Big)^{\frac{1}{q}}\\
&-\Big(\sum_{-\delta N\le j\le 0}2^{-\frac{1}{2}jq}\big\|\dot\Delta_j\int_0^{t_0} e^{(t_0-s)\Delta}(U^2_0\partial_{x_2} U^2_0)\dd s\big\|^q_{L^4}\Big)^{\frac{1}{q}}\\
&-\Big(\sum_{-\delta N\le j\le 0}2^{-\frac{1}{2}jq}\Big{\|}\dot\Delta_j\int_0^t e^{(t-s)\Delta}\big(\nabla \int_0^s \Div U_0(\tau)\dd \tau\cdot\nabla U^{(2)}_0(s)\big)\dd s\Big{\|}^q_{L^4}\Big)^{\frac{1}{q}}.
\end{aligned}
\end{equation}
At the beginning, we give the upper bound of the second term on the right-hand side of the above inequality. Using Bernstein's inequality, we have
\begin{align*}
&\Big(\sum_{-\delta N\le j\le 0}2^{-\frac{1}{2}jq}\big\|\dot\Delta_j\int_0^{t_0} e^{(t_0-s)\Delta}(U^2_0\partial_{x_2} U^2_0)\dd s\big\|^q_{L^4}\Big)^{\frac{1}{q}}
\le\Big(\sum_{-\delta N\le j\le 0}2^{jq}\Big)^{\frac{1}{q}}t_0\|U_0\|^2_{L^\infty_{t_0}L^4}\le \frac{C}{(\ln N)^2}.
\end{align*}
By Fourier transform, it yields that
\begin{align*}
&\Big(\sum_{-\delta N\le j\le 0}2^{-\frac{1}{2}jq}\big\|\dot\Delta_j\int_0^{t_0} e^{(t_0-s)\Delta}(U^1_0\partial_{x_1} U^2_0)\dd s\big\|^q_{L^4}\Big)^{\frac{1}{q}}\\
=&\Big(\sum_{-\delta N\le j\le 0}2^{-\frac{1}{2}jq}\big\|\mathscr{F}^{-1}\big(\phi_j(\xi)\int_{\RR^2}
\frac{e^{-t_0|\xi|^2}-e^{-t_0(|\xi-\eta|^2+|\eta|^2)}}{|\xi-\eta|^2+|\eta|^2-|\xi|^2}\widehat{u^1_0}(\xi-\eta)i\eta_1 \widehat{u^2_0}(\eta)\dd \eta\big)\big\|^q_{L^4}\Big)^{\frac{1}{q}}.
\end{align*}
Taking advantage of Taylor's series, we have
\begin{align*}
&\frac{e^{-t_0|\xi|^2}-e^{-t_0(|\xi-\eta|^2+|\eta|^2)}}{|\xi-\eta|^2+|\eta|^2-|\xi|^2}
=t_0e^{-t_0|\xi|^2}\sum_{k=1}^\infty(-1)^{k+1}\frac{\big(t_0(|\xi-\eta|^2+|\eta|^2-|\xi|^2)\big)^{k-1}}{k!}\\
=&t_0e^{-t_0|\xi|^2}+t_0e^{-t_0|\xi|^2}\sum_{k=2}^\infty(-1)^{k+1}\frac{\big(t_0(|\xi-\eta|^2+|\eta|^2-|\xi|^2)\big)^{k-1}}{k!}
:=t_0e^{-t_0|\xi|^2}+G(\xi-\eta,\eta).
\end{align*}
Hence, it follows from the first term on the right-hand side of inequality \eqref{U_1} that
\begin{align*}
&\Big(\sum_{-\delta N\le j\le 0}2^{-\frac{1}{2}jq}\big\|\dot\Delta_j\int_0^{t_0} e^{(t_0-s)\Delta}(U^1_0\partial_{x_1} U^2_0)\dd s\big\|^q_{L^4}\Big)^{\frac{1}{q}}\\
\ge&\Big(\sum_{-\delta N\le j\le 0}2^{-\frac{1}{2}jq}\big\|\mathscr{F}^{-1}\big(\phi_j(\xi)
t_0\int_{\RR^2}\widehat{u^1_0}(\xi-\eta)i\eta_1 \widehat{u^2_0}(\eta)\dd \eta\big)\big\|^q_{L^4}\Big)^{\frac{1}{q}}\\
&-\Big(\sum_{-\delta N\le j\le 0}2^{-\frac{1}{2}jq}\big\|\mathscr{F}^{-1}\big(\phi_j(\xi)
(t_0e^{-t_0|\xi|^2}-t_0)\int_{\RR^2}\widehat{u^1_0}(\xi-\eta)i\eta_1 \widehat{u^2_0}(\eta)\dd \eta\big)\big\|^q_{L^4}\Big)^{\frac{1}{q}}\\
&-\Big(\sum_{-\delta N\le j\le 0}2^{-\frac{1}{2}jq}\big\|\mathscr{F}^{-1}\big(\phi_j(\xi)
\int_{\RR^2}G(\xi-\eta,\eta)\widehat{u^1_0}(\xi-\eta)i\eta_1 \widehat{u^2_0}(\eta)\dd \eta\big)\big\|^q_{L^4}\Big)^{\frac{1}{q}}\\
:=&\big(\sum_{-\delta N\le j\le 0}2^{-\frac{1}{2}jq}I^q_j\big)^{\frac{1}{q}}-\big(\sum_{-\delta N\le j\le 0}2^{-\frac{1}{2}jq}II^q_j\big)^{\frac{1}{q}}-\big(\sum_{-\delta N\le j\le 0}2^{-\frac{1}{2}jq}III^q_j\big)^{\frac{1}{q}}.
\end{align*}
For the term $I_j$, from the definition of initial data $u_0$, we  get that
\begin{align*}
I_j
\ge&\frac{t_02^{2N}}{(\ln N)^2N^{\frac{1}{2}}}2^j\|\dot\Delta_j(\Phi^2_{j, N}\sin^2(2^Nx_1))\|_{L^4}\\
&-\frac{t_02^{2N}}{(\ln N)^2N^{\frac{1}{2}}}\sum_{-\delta N\le k\neq j\le0}2^{k}\|\dot\Delta_j(\Phi^2_{k, N}\sin^2(2^Nx_1))\|_{L^4}\\
&-\frac{t_02^{2N}}{(\ln N)^2N^{\frac{1}{2}}}\sum_{-\delta N\le k\neq m\le0}2^{\frac{k}{2}+\frac{m}{2}}\|\dot\Delta_j(\Phi_{k, N}\Phi_{m, N}\sin^2(2^Nx_1))\|_{L^4}\\
&-\frac{t_02^{N}}{(\ln N)^2N^{\frac{1}{2}}}\sum_{-\delta N\le k, m\le0}2^{\frac{k}{2}+\frac{m}{2}}\|\dot\Delta_j(\Phi_{k, N}\partial_{x_1}(\Phi_{m, N})\sin(2^Nx_1)\cos(2^N x_1))\|_{L^4}\\
:=&I_{j1}-I_{j2}-I_{j3}-I_{j4}.
\end{align*}
Now we estimate $I_{ji}(i=1,2,3,4)$ respectively. We define the set $E_j$ by
\[E_j=\{x\in\RR^2| |x-2^{|j|+2N}e_1|\le 2^{-j}\}.\]
For $I_{j1}$, by $\sin^2x=\frac{1-\cos 2x}{2}$ and  triangle inequality, we have
\begin{align*}
I_{j1}&\ge \frac{t_02^{2N}}{2(\ln N)^2N^{\frac{1}{2}}}2^j\big{\|}\int_{\RR^2}2^{2j}\varphi_0(2^j(x-y))\varphi^2_0(2^j(y-2^{|j|+2N}e_1))\dd y\big{\|}_{L^4(E_j)}\\
&-\frac{t_02^{2N}}{2(\ln N)^2N^{\frac{1}{2}}}2^j\big{\|}\int_{\RR^2}2^{2j}\varphi_0(2^j(x-y))\varphi^2_0(2^j(y-2^{|j|+2N}e_1))\cos(2^{N+1}y_1)\dd y\big{\|}_{L^4(E_j)}.
\end{align*}
Taking advantage of change of variables, the first term on the right-hand side of the above inequality can be bounded as follows.
\begin{align*}
 &\frac{t_02^{2N}}{2(\ln N)^2N^{\frac{1}{2}}}2^j\big{\|}\int_{\RR^2}2^{2j}\varphi_0(2^j(x-y))\varphi^2_0(2^j(y-2^{|j|+2N}e_1))\dd y\big{\|}_{L^4(E_j)}\\
 =&\frac{t_02^{2N}2^{\frac{j}{2}}}{2(\ln N)^2N^{\frac{1}{2}}}\big{\|}\int_{\RR^2}\varphi_0(x-y)\varphi^2_0(y)\dd y\big{\|}_{L^4(|x|\le 1)}=\frac{C 2^{\frac{j}{2}}}{(\ln N)^3N^{\frac{1}{2}}}.
\end{align*}
By integration by parts, one yields that
\begin{align*}
&\frac{t_02^{2N}}{2(\ln N)^2N^{\frac{1}{2}}}2^j\big{\|}\int_{\RR^2}2^{2j}\varphi_0(2^j(x-y))\varphi^2_0(2^j(y-2^{|j|+2N}e_1))\cos(2^{N+1}y_1)\dd y\big{\|}_{L^4(E_j)}\\
=&\frac{t_02^{2N}2^{3j}}{2(\ln N)^2N^{\frac{1}{2}}}\big{\|}\int_{\RR^2}\partial_{y_1}(\varphi_0(2^j(x-y))\varphi^2_0(2^j(y-2^{|j|+2N}e_1)))\frac{\sin(2^{N+1}y_1)}{2^{N+1}}\dd y\big{\|}_{L^4(E_j)}\\
\le&\frac{C2^{-N}2^{\frac{3j}{2}}}{(\ln N)^3N^{\frac{1}{2}}}.
\end{align*}
Owning to $-\delta N\le j\le0$, from the above two estimates,  we infer that
\[I_{j1}\ge \frac{C 2^{\frac{j}{2}}}{(\ln N)^3N^{\frac{1}{2}}}.\]
Noting the fact that $|\varphi_0(x)|\le\frac{C_\beta}{(1+|x|)^\beta}$  for $\beta\in \NN$, $I_{j2}$ can be bounded by
\begin{align*}
I_{j2}\le& \frac{t_02^{2N}}{(\ln N)^2N^{\frac{1}{2}}}\sum_{-\delta N\le k\neq j\le0}2^{k}\|2^{2j}\int_{\RR^d}|\varphi_0(2^j(x-y))|\varphi^2_0(2^k(y-2^{|k|+2N}e_1))\dd y\|_{L^4(E_j)}\\
\le& \frac{Ct_02^{2N}}{(\ln N)^2N^{\frac{1}{2}}}\sum_{-\delta N\le k\neq j\le0}2^{k+2j}\Big\|\int_{\RR^d}\frac{1}{(1+2^j|x-y|)^\beta}\frac{1}{(1+2^k|y-2^{|k|+2N}e_1|)^{2\beta}}\dd y\Big\|_{L^4(E_j)}.
\end{align*}
Dividing the integral region in terms of $y$ into the following three parts to estimate:
\begin{align*}
&A_1:=\{y| |y-2^{|j|+2N}e_1|\le2^{2N-1}\},\\
&A_2:=\{y| |y-2^{|j|+2N}e_1|\ge2^{2N-1}, |y-2^{|k|+2N}e_1|\ge2^{2N-2}\},\\
&A_3:=\{y| |y-2^{|j|+2N}e_1|\ge2^{2N-1}, |y-2^{|k|+2N}e_1|\le 2^{2N-2}\},
\end{align*}
we conclude that,  for $x\in E_j$ and $y\in A_1$,
\begin{align*}
|y-2^{|k|+2N}e_1|&=|y-2^{|j|+2N}e_1+2^{|j|+2N}e_1-2^{|k|+2N}e_1| \\
&\ge |2^{|j|+2N}e_1-2^{|k|+2N}e_1|-|y-2^{|j|+2N}e_1|\ge 2^{2N-1}.
\end{align*}
For $x\in E_j$, $y\in A_3$, it is easy to check that
\begin{align*}
|x-y|=&|x-2^{|j|+2N}e_1+2^{|j|+2N}e_1-2^{|k|+2N}e_1+2^{|k|+2N}e_1-y|\\
\ge &|2^{|j|+2N}e_1-2^{|k|+2N}e_1|-|y-2^{|k|+2N}e_1|-2^{-j}\ge C2^{2N}.
\end{align*}
Therefore, for $-\delta N\le j\le0$, we obtain that
\begin{align*}
I_{j2}\le& \frac{Ct_02^{2N}}{(\ln N)^2N^{\frac{1}{2}}}\sum_{-\delta N\le k\neq j\le0}2^{k+2j}\Big(\Big\|\frac{2^{-2j}}{(2^k2^{2N})^{2\beta}}\Big\|_{L^{4}(A_j)}
+\Big\|\frac{2^{-2k}}{(2^j2^{2N})^\beta}\Big\|_{L^{4}(A_j)}\Big)
\le \frac{C2^{-2N}2^{-\frac{1}{2}j}}{(\ln N)^3N^{\frac{1}{2}}}.
\end{align*}
Following the similar arguments, we deduce that
\begin{align*}
I_{j3}\le& \frac{t_02^{2N}}{(\ln N)^2N^{\frac{1}{2}}}\sum_{-\delta N\le k\neq m\le0}2^{\frac{1}{2}(k+m)}  \|\varphi_0(2^k(x-2^{|k|+2N}e_1))\varphi_0(2^m(x-2^{|m|+2N}e_1))\|_{L^{4}(\RR^2)}\\
\le& \frac{Ct_02^{2N}}{(\ln N)^2N^{\frac{1}{2}}}\sum_{-\delta N\le k\neq m\le 0}2^{\frac{1}{2}(k+m)} \Big\|\frac{1}{(1+2^k|x-2^{|k|+2N}e_1|)}\frac{1}{(1+2^m|x-2^{|m|+2N}e_1|)}\Big\|_{L^{4}(\RR^2)}.
\end{align*}
For $|x-2^{|k|+2N}e_1|\ge 2^{2N-1}$, we have
\begin{align*}
&\Big\|\frac{1}{(1+2^k|x-2^{|k|+2N}e_1|)}\frac{1}{(1+2^m|x-2^{|m|+2N}e_1|)}\Big\|_{L^{4}(\{|x-2^{|k|+2N}e_1|\ge 2^{2N-1}\})}\\
\le&C2^{-k}2^{-2N}\Big\|\frac{1}{(1+2^m|x-2^{|m|+2N}e_1|)}\Big\|_{L^{4}(\RR^2)}\le C2^{-k}2^{-2N}2^{-\frac{1}{2}m}.
\end{align*}
For $|x-2^{|k|+2N}e_1|\le 2^{2N-1}$, then we derive
\[|x-2^{|m|+2N}e_1|\ge|2^{|k|+2N}e_1-2^{|m|+2N}e_1|-|x-2^{|k|+2N}e_1|\ge2^{2N-1},\]
from which we obtain that
\begin{align*}
&\Big\|\frac{1}{(1+2^k|x-2^{|k|+2N}e_1|)}\frac{1}{(1+2^m|x-2^{|m|+2N}e_1|)}\Big\|_{L^{4}(\{|x-2^{|k|+2N}e_1|\le 2^{2N-1}\})}\\
\le&C2^{-m}2^{-2N}\Big\|\frac{1}{(1+2^k|x-2^{|k|+2N}e_1|)}\Big\|_{L^{4}(\RR^2)}\le C2^{-m}2^{-2N}2^{-\frac{1}{2}k}.
\end{align*}
Therefore, we conclude that
\begin{align*}
I_{j3}\le \frac{Ct_02^{2N}}{(\ln N)^2N^{\frac{1}{2}}}\sum_{-\delta N\le k\neq m\le 0}2^{-2N}2^{-\frac{1}{2}(k+m)}\le \frac{C2^{-2N}}{(\ln N)^3N^{\frac{1}{2}}}.
\end{align*}
Similarly, $I_{j4}$ can be bounded by
\begin{align*}
I_{j4}\le \frac{Ct_02^{2N}}{(\ln N)^2N^{\frac{1}{2}}}\sum_{-\delta N\le k\neq m\le 0}2^{-2N}2^{-\frac{1}{2}(k+m)}2^{-N}2^m\le \frac{C2^{-2N}}{(\ln N)^3N^{\frac{1}{2}}}.
\end{align*}
To sum up, we get that
\begin{equation}\label{Ij1}
\begin{aligned}
&\big(\sum_{-\delta N\le j\le 0}2^{-\frac{1}{2}jq}I^q_j\big)^{\frac{1}{q}}
\ge \frac{C}{(\ln N)^3N^{\frac{1}{2}}}(\sum_{-\delta N\le j\le 0}1)^{\frac{1}{q}}=\frac{CN^{\frac{1}{q}-\frac{1}{2}}}{(\ln N)^3}.
\end{aligned}
\end{equation}
Now we estimate the upper bound of $II_j$. Due to $t_0e^{-t_0|\xi|^2}-t_0=t_0\sum_{k=1}^\infty\frac{(-t_0 |\xi|)^k}{k!},$ then
\begin{align*}
II_j\le &\sum_{k=1}^\infty \frac{t^{k+1}_0}{k!} \big\|\mathscr{F}^{-1}\big((-|\xi|^2)^k\phi_j(\xi)
\int_{\RR^2}\widehat{u^1_0}(\xi-\eta)i\eta_1 \widehat{u^2_0}(\eta)\dd \eta\big)\big\|_{L^4}\\
=&\sum_{k=1}^\infty \frac{t^{k+1}_0}{k!} \big\|\big(\Delta^k\dot\Delta_j(u^1_0\partial_{x_1}u^2_0)
\big)\big\|_{L^4}\\
\le&\sum_{k=1}^\infty \frac{C^kt^{k+1}_0}{k!}2^{2jk}2^{\frac{j}{2}}2^N\|u^1_0\|_{L^4}\|u^2_0\|_{L^4}
\le\frac{C2^{\frac{5j}{2}}}{(\ln N)^4}.
\end{align*}
Therefore, it is easy to obtain that
\begin{align*}
\big(\sum_{-\delta N\le j\le 0}2^{-\frac{1}{2}jq}II^q_j\big)^{\frac{1}{q}}\le \big(\sum_{-\delta N\le j\le 0}\frac{C2^{2jq}}{(\ln N)^4}\big)^{\frac{1}{q}}\le \frac{C}{(\ln N)^4}.
\end{align*}
For $III_j$, it is easy to check that
\begin{equation}\label{G}
\begin{aligned}
G(\xi-\eta,\eta)
=&t_0e^{-t_0|\xi|^2}\sum_{k=1}^\infty\frac{(-1)^{k}t^{k}_0}{(k+1)!}\sum_{m=0}^{k-\ell}\sum_{\ell=0}^{k}C^\ell_kC^{m}_{k-\ell}|\xi-\eta|^{2\ell}|\eta|^{2m}(-|\xi|^2)^{k-\ell-m}.
\end{aligned}
\end{equation}
Noting the fact that supp $\widehat {u}_0\sim 2^N\mathcal{C}$ and $|\xi|\le 2$, then $G(\xi-\eta, \eta)=O((\ln N)^{-2}2^{-2N})$. Hence, by {Lemma} \ref{multi}, for $-\delta N\le j\le0$, we obtain that
\begin{equation}\label{IIIj}
\begin{aligned}
III_j\le&C\sum_{k=1}^\infty\frac{t^{k+1}_0}{(k+1)!}\sum_{m=0}^{k-\ell}\sum_{\ell=0}^{k}
\frac{k!}{(k-\ell-m)!}\|\dot\Delta_j e^{t_0\Delta}\Delta^{k-\ell-m}\big((-\Delta)^{\ell}u^{(1)}_0(-\Delta)^{m}\partial_{x_1}u^{(2)}_0\big)\|_{L^4}\\
\le&C\sum_{k=1}^\infty\frac{t^{k+1}_0}{(k+1)!}\sum_{m=0}^{k-1-\ell}\sum_{\ell=0}^{k}
\frac{k!2^{2j(k-\ell-m)}}{(k-\ell-m)!}\|(-\Delta)^{\ell}u^{(1)}_0\|_{L^8}\|(-\Delta)^{m}\partial_{x_1}u^{(2)}_0\|_{L^8}\\
&+C\sum_{k=1}^\infty\frac{t^{k+1}_0}{(k+1)!}\sum_{\ell=0}^{k}\|\dot\Delta_j(\Delta^{\ell}u^{(1)}_0\Delta^{k-\ell}\partial_{x_1}u^{(2)}_0)\|_{L^4}\\
\le&C\sum_{k=1}^\infty\frac{t^{k+1}_0}{(k+1)!}\sum_{m=0}^{k-1-\ell}\sum_{\ell=0}^{k}
\frac{k!2^{2j(k-\ell-m)}}{(k-\ell-m)!}\frac{2^{2N(\ell+m)}\cdot2^{2N}}{N^{\frac{1}{2}}(\ln N)^2}\\
&+C\sum_{k=1}^\infty\frac{t^{k+1}_0}{(k+1)!}\sum_{\ell=0}^{k}\|\dot\Delta_j(\Delta^{\ell}u^{(1)}_0\Delta^{k-\ell}\partial_{x_1}u^{(2)}_0)\|_{L^4}\\
\le&C2^{2j}\sum_{k=1}^\infty\frac{3^{k-1}t^{k+1}_0}{(k+1)!}\frac{2^{2N(k-1)}\cdot2^{2N}}{N^{\frac{1}{2}}(\ln N)^2}+C\sum_{k=1}^\infty\frac{t^{k+1}_0}{(k+1)!}\sum_{\ell=0}^{k}\|\dot\Delta_j(\Delta^{\ell}u^{(1)}_0\Delta^{k-\ell}\partial_{x_1}u^{(2)}_0)\|_{L^4}\\
\le&\frac{C2^{2j}2^{-2N}}{N^{\frac{1}{2}}(\ln N)^4}+C\sum_{k=1}^\infty\frac{t^{k+1}_0}{(k+1)!}\sum_{\ell=0}^{k}\|\dot\Delta_j(\Delta^{\ell}u^{(1)}_0\Delta^{k-\ell}\partial_{x_1}u^{(2)}_0)\|_{L^4}.
\end{aligned}
\end{equation}
For the last term on the above inequality,  it is enough to estimate
\begin{align*}
&\frac{2^N}{N^{\frac{1}{2}}(\ln N)^2}\Big\|\dot\Delta_j(\sum_{-\delta N\le m\le0}2^{\frac{m}{2}}\Phi_{m,N}(\Delta^{\ell}\sin(2^Nx_1))\sum_{-\delta N\le n\le0}2^{\frac{n}{2}}\Phi_{k,N}(\Delta^{k-\ell}\partial_{x_1}\cos(2^Nx_1)))\Big\|_{L^4}\\
\le&\frac{2^{2N(k+1)}}{N^{\frac{1}{2}}(\ln N)^2}\Big\|\dot\Delta_j(\sum_{-\delta N\le m\le0}2^{\frac{m}{2}}\Phi_{m,N}\sum_{-\delta N\le n\le0}2^{\frac{n}{2}}\Phi_{k,N}\sin^2(2^Nx_1))\Big\|_{L^4}\\
\le&\frac{2^{2N(k+1)}}{N^{\frac{1}{2}}(\ln N)^2}\Big(2^{j}\|\dot\Delta_j (\Phi^2_{j,N}\sin^2(2^Nx_1))\|_{L^4}+\sum_{-\delta N\le m\neq j\le0}2^m\|\dot\Delta_j (\Phi^2_{m,N}\sin^2(2^Nx_1))\|_{L^4}\\
&+\sum_{-\delta N\le m\neq n\le0}2^{\frac{m+n}{2}}\|\dot\Delta_j (\Phi_{m,N}\Phi_{n,N}\sin^2(2^Nx_1))\|_{L^4}\Big).
\end{align*}
By the definition of $\Phi_{j, N}$, one yields that
\begin{align*}
2^{j}\|\dot\Delta_j (\Phi^2_{j,N}\sin^2(2^Nx_1))\|_{L^4}\le C2^j\|\Phi_{j,N}\|^2_{L^8}\le C2^{\frac{1}{2}j}.
\end{align*}
Following the methods on $I_{j2}$ and $I_{j3}$, it is easy to get that
\begin{align*}
&\sum_{-\delta N\le m\neq j\le0}2^m\|\dot\Delta_j (\Phi^2_{m,N}\sin^2(2^Nx_1))\|_{L^4}+\sum_{-\delta N\le m\neq n\le0}2^{\frac{m+n}{2}}\|\dot\Delta_j (\Phi_{m,N}\Phi_{n,N}\sin^2(2^Nx_1))\|_{L^4}\\
\le&C2^{-2N}2^{-\frac{1}{2}j}+ C2^{-2N}.
\end{align*}
Therefore, we obtain that
\begin{align*}
&\frac{2^N}{N^{\frac{1}{2}}(\ln N)^2}\Big\|\dot\Delta_j(\sum_{-\delta N\le m\le0}2^{\frac{m}{2}}\Phi_{m,N}(\Delta^{\ell}\sin(2^Nx_1))\sum_{-\delta N\le n\le0}2^{\frac{n}{2}}\Phi_{k,N}(\Delta^{k-\ell}\partial_{x_1}\cos(2^Nx_1)))\Big\|_{L^4}\\
\le&\frac{C2^{2N(k+1)}}{N^{\frac{1}{2}}(\ln N)^2}(2^{\frac{1}{2}j}+2^{-2N}2^{-\frac{1}{2}j}+ 2^{-2N})\le \frac{C2^{2N(k+1)}2^{\frac{j}{2}}}{N^{\frac{1}{2}}(\ln N)^2}.
\end{align*}
This implies that
\begin{align*}
III_j\le \frac{C2^{2j}2^{-2N}}{N^{\frac{1}{2}}(\ln N)^4}+C2^{\frac{j}{2}}\sum_{k=1}^\infty\frac{k(t_02^{2N})^{k+1}}{(k+1)!}\le\frac{C(2^{2j}2^{-2N}+2^{{\frac{1}{2}}j})}{N^{\frac{1}{2}}(\ln N)^4}.
\end{align*}
Therefore, we have that
\begin{align*}
\big(\sum_{-\delta N\le j\le 0}2^{-\frac{1}{2}jq}III^q_j\big)^{\frac{1}{q}}\le \frac{CN^{\frac{1}{q}-\frac{1}{2}}}{(\ln N)^4}.
\end{align*}
We  utilize estimates on $I_j$ and $II_j$ to see that
\begin{align*}
\big(\sum_{-\delta N\le j\le 0}2^{-\frac{1}{2}jq}\big\|\dot\Delta_j\int_0^{t_0} e^{(t_0-s)\Delta}(U^1_0\partial_{x_1} U^2_0)\dd s\big\|^q_{L^4}\big)^{\frac{1}{q}}\ge \frac{C {N^{\frac{1}{q}-\frac{1}{2}}}}{(\ln N)^3}-\frac{C}{(\ln N)^4}-\frac{C {N^{\frac{1}{q}-\frac{1}{2}}}}{(\ln N)^4}\ge \frac{C{N^{\frac{1}{q}-\frac{1}{2}}}}{(\ln N)^3}.
\end{align*}
Now we turn to estimate the third term on the right-hand side of inequality \eqref{U_1}. An easy computation yields that
\begin{align*}
&\Big(\sum_{-\delta N\le j\le 0}2^{-\frac{1}{2}jq}\big{\|}\dot\Delta_j\int_0^t e^{(t-s)\Delta}\nabla \int_0^s \Div U_0(\tau)\dd \tau\cdot\nabla U^{(2)}_0(s)\dd s\big{\|}^q_{L^4}\Big)^{\frac{1}{q}}\\
\le&\Big(\sum_{-\delta N\le j\le 0}2^{-\frac{1}{2}jq}\big{\|}\dot\Delta_j\int_0^t e^{(t-s)\Delta}\big( \big(\int_0^s \partial^2_{x_1}U^{(1)}_0(\tau)\dd \tau\big)\partial_{x_1} U^{(2)}_0(s)\big)\dd s\big{\|}^q_{L^4}\Big)^{\frac{1}{q}}\\
&+\Big(\sum_{-\delta N\le j\le 0}2^{-\frac{1}{2}jq}\big{\|}\dot\Delta_j\int_0^t e^{(t-s)\Delta}\big( \big(\int_0^s \partial_{x_1}\partial_{x_2}U^{(2)}_0(\tau)\dd \tau\big)\partial_{x_1} U^{(2)}_0(s)\big)\dd s\big{\|}^q_{L^4}\Big)^{\frac{1}{q}}\\
&+\Big(\sum_{-\delta N\le j\le 0}2^{-\frac{1}{2}jq}\big{\|}\dot\Delta_j\int_0^t e^{(t-s)\Delta}\big( \big(\int_0^s \partial_{x_1}\partial_{x_2}U^{(1)}_0(\tau)\dd \tau\big)\partial_{x_2} U^{(2)}_0(s)\big)\dd s\big{\|}^q_{L^4}\Big)^{\frac{1}{q}}\\
&+\Big(\sum_{-\delta N\le j\le 0}2^{-\frac{1}{2}jq}\big{\|}\dot\Delta_j\int_0^t e^{(t-s)\Delta}\big( \big(\int_0^s \partial^2_{x_2}U^{(2)}_0(\tau)\dd \tau\big)\partial_{x_2} U^{(2)}_0(s)\Big)\dd s\big{\|}^q_{L^4}\Big)^{\frac{1}{q}}.
\end{align*}
For the last three terms on the right-hand side of the above inequality, we obtain that
\begin{align*}
&\Big(\sum_{-\delta N\le j\le 0}2^{-\frac{1}{2}jq}\big{\|}\dot\Delta_j\int_0^t e^{(t-s)\Delta}\big( \big(\int_0^s \partial_{x_1}\partial_{x_2}U^{(2)}_0(\tau)\dd \tau\big)\partial_{x_1} U^{(2)}_0(s)\big)\dd s\big{\|}^q_{L^4}\Big)^{\frac{1}{q}}\\
&+\Big(\sum_{-\delta N\le j\le 0}2^{-\frac{1}{2}jq}\big{\|}\dot\Delta_j\int_0^t e^{(t-s)\Delta}\big( \big(\int_0^s \partial_{x_1}\partial_{x_2}U^{(1)}_0(\tau)\dd \tau\big)\partial_{x_2} U^{(2)}_0(s)\big)\dd s\big{\|}^q_{L^4}\Big)^{\frac{1}{q}}\\
&+\Big(\sum_{-\delta N\le j\le 0}2^{-\frac{1}{2}jq}\big{\|}\dot\Delta_j\int_0^t e^{(t-s)\Delta}\big( \big(\int_0^s \partial^2_{x_2}U^{(2)}_0(\tau)\dd \tau\big)\partial_{x_2} U^{(2)}_0(s)\big)\dd s\big{\|}^q_{L^4}\Big)^{\frac{1}{q}}\\
\le&CN^{\frac{1}{q}}T^2(\|\partial_{x_1x_2}u^{(2)}_0\|_{L^4}\|\partial_{x_1}u^{(2)}_0\|_{L^4}
+\|\partial_{x_1x_2}u^{(1)}_0\|_{L^4}\|\partial_{x_2}u^{(2)}_0\|_{L^4}
+\|\partial^2_{x_2}u^{(2)}_0\|_{L^4}\|\partial_{x_2}u^{(2)}_0\|_{L^4})\\
\le& \frac{CN^{\frac{1}{q}}2^{-N}}{(\ln N)^4}.
\end{align*}
By Fourier transform, we get that
\begin{align*}
&\mathscr{F}\Big(\int_0^t e^{(t-s)\Delta}\Big( \big(\int_0^s \partial^2_{x_1}U^{(1)}_0(\tau)\dd \tau\big)\partial_{x_1} U^{(2)}_0(s)\Big)\dd s\Big)\\
=&-\int_0^te^{-(t-s)|\xi|^2}\int_{\RR^2}\int_0^s|\xi_1-\eta_1|^2e^{-\tau|\xi-\eta|^2}\widehat{u^{(2)}_0}(\xi-\eta)
\dd\tau i\eta_1 e^{-s|\eta|^2}\widehat{u^{(2)}_0}(\eta)\dd\eta\dd s\\
=&\int_{\RR^2}e^{-t|\xi|^2}\Big(\frac{e^{t(|\xi|^2-|\eta|^2-|\xi-\eta|^2)}-1}{|\xi|^2-|\eta|^2-|\xi-\eta|^2}
-\frac{e^{t(|\xi|^2-|\eta|^2)}-1}{|\xi|^2-|\eta|^2}\Big)\frac{|\xi_1-\eta_1|^2}{|\xi-\eta|^2}\widehat{u^{(2)}_0}(\xi-\eta)
 i\eta_1 \widehat{u^{(2)}_0}(\eta)\dd\eta.
\end{align*}
Using  Taylor's series, it easily yields that
\begin{align*}
&\frac{e^{t(|\xi|^2-|\eta|^2-|\xi-\eta|^2)}-1}{|\xi|^2-|\eta|^2-|\xi-\eta|^2}
-\frac{e^{t(|\xi|^2-|\eta|^2)}-1}{|\xi|^2-|\eta|^2}\\
=&\sum_{k=1}^\infty\frac{t^{k+1}(|\xi|^2-|\eta|^2-|\xi-\eta|^2)^{k}}{(k+1)!}
-\sum_{k=1}^\infty\frac{t^{k+1}(|\xi|^2-|\eta|^2)^{k}}{(k+1)!}.
\end{align*}
Therefore, by \eqref{G} and estimates on $III_j$,  we have
\begin{align*}
&\Big{\|}\dot\Delta_j\int_0^t e^{(t-s)\Delta}\Big( \big(\int_0^s \partial^2_{x_1}U^{(1)}_0(\tau)\dd \tau\big)\partial_{x_1} U^{(2)}_0(s)\Big)\dd s\Big{\|}_{L^4}\\
\le&\Big{\|}\mathscr{F}^{-1}\Big(\widehat{\varphi}_j(\xi)\int_{\RR^2}e^{-t|\xi|^2}\Big(\sum_{k=1}^\infty\frac{t^{k+1}(|\xi|^2-|\eta|^2-|\xi-\eta|^2)^{k}}{(k+1)!}\Big)\frac{|\xi_1-\eta_1|^2}{|\xi-\eta|^2}\widehat{u^{(2)}_0}(\xi-\eta)
 i\eta_1 \widehat{u^{(2)}_0}(\eta)\dd\eta\Big)\Big{\|}_{L^{4}}\\
 &+\Big{\|}\mathscr{F}^{-1}\Big(\widehat{\varphi}_j(\xi)\int_{\RR^2}e^{-t|\xi|^2}
 \Big(\sum_{k=1}^\infty\frac{t^{k+1}(|\xi|^2-|\eta|^2)^{k}}{(k+1)!}\Big)\frac{|\xi_1-\eta_1|^2}{|\xi-\eta|^2}\widehat{u^{(2)}_0}(\xi-\eta)
 i\eta_1 \widehat{u^{(2)}_0}(\eta)\dd\eta\Big)\Big{\|}_{L^{4}}\\
 \le&\sum_{k=1}^\infty\frac{t_0^{k+1}}{(k+1)!}\sum_{m=0}^{k-\ell}\sum_{\ell=0}^{k}C^\ell_kC^m_{k-\ell}\|\dot\Delta_je^{t_0\Delta}(-\Delta)^{k-\ell-m}\big(\Delta^{\ell-1}{\partial^2_{x_1}u^{(2)}_0}\Delta^m\partial_{x_1}u^{(2)}_0\big)\|_{L^4}\\
 &+\sum_{k=1}^\infty\frac{t_0^{k+1}}{(k+1)!}\sum_{\ell=0}^{k}C^\ell_k\|\dot\Delta_je^{t_0\Delta}(-\Delta)^{k-\ell}\big(\Delta^{-1}{\partial^2_{x_1}u^{(2)}_0}\Delta^\ell\partial_{x_1}u^{(2)}_0\big)\|_{L^4}
 \le\frac{C2^{\frac{1}{2}j}}{N^{\frac{1}{2}}(\ln N)^4}.
\end{align*}
Hence, we obtain that
\begin{align*}
\Big(\sum_{-\delta N\le j\le 0}2^{-\frac{1}{2}jq}\Big{\|}\dot\Delta_j\int_0^t e^{(t-s)\Delta}\big(\nabla \int_0^s \Div U_0(\tau)\dd \tau\cdot\nabla U^{(2)}_0(s)\big)\dd s\Big{\|}^q_{L^4}\Big)^{\frac{1}{q}}
\le\frac{ CN^{\frac{1}{q}-\frac{1}{2}}}{(\ln N)^4}+\frac{CN^{\frac{1}{q}}2^{-N}}{(\ln N)^4}.
\end{align*}
To sum up,
\begin{equation}\label{U__1}
\|U_1(t_0)\|_{\dot B^{-\frac{1}{2}}_{4, 1}}\ge \frac{CN^{\frac{1}{q}-\frac{1}{2}}}{(\ln N)^3}-C(\ln N)^{-2}-\frac{ CN^{\frac{1}{q}-\frac{1}{2}}}{(\ln N)^4}-\frac{CN^{\frac{1}{q}}2^{-N}}{(\ln N)^4}\ge\frac{CN^{\frac{1}{q}-\frac{1}{2}}}{(\ln N)^3}.
\end{equation}
Now we need to estimate $\|U_2(t_0)\|_{\dot B^0_{[2,\infty],1}}$.
Based on Proposition \ref{minius}, we obtain that
\begin{align*}
&\|u-U_0-U_1\|_{L^\infty_T(\dot B^{-\frac{1}{2}}_{4, 1})}\\
\le &C(\|(u-U_0)\cdot\nabla u\|_{L^1_T (\dot B^{-\frac{1}{2}}_{4, 1})}+\|U_0\cdot\nabla (u-U_0)\|_{L^1_T (\dot B^{-\frac{1}{2}}_{4, 1})}+\|\nabla(\ln(1+h)-h)\cdot\nabla u\|_{L^1_T(\dot B^{-\frac{1}{2}}_{4, 1})}\\
&+T\|h\|_{L^\infty_T (\dot B^{\frac{3}{4}}_{4, 1})}+\|\int_0^te^{(t-s)\Delta}\nabla(h+\int_0^t \Div U_0)\cdot\nabla u\|_{L^\infty_T(\dot B^{-\frac{1}{2}}_{4, 1})}\\
&+\|\nabla\int_0^t \Div U_0\cdot\nabla (u-U_0)\|_{L^1_T(\dot B^{-\frac{1}{2}}_{4, 1})}.
\end{align*}
Actually, by inequality \eqref{u-u0}, we have
\begin{align*}
&\|(u-U_0)\cdot\nabla u\|_{L^1_T (\dot B^0_{[2,\infty],1})}+\|U_0\cdot\nabla (u-U_0)\|_{L^1_T (\dot B^0_{[2,\infty],1})}\\
\le&CT^{\frac{1-\e}{2}}\|u-U_0\|_{L^\infty_T(\B)}\|u\|_{L^{\frac{2}{1+\e}}_T(\dot B^{1+\e}_{[2,\infty],1})}+CT^{\frac{1-\e}{2}}\|u_0\|_{\B}\|u-U_0\|_{L^{\frac{2}{1+\e}}_T(\dot B^{1+\e}_{[2,\infty],1})}\\
\le&C2^{(-\frac{1}{2}+\frac{3\delta}{2}+2\e)N}.
\end{align*}
By bilinear estimates, it follows that
\begin{align*}
&\|\nabla(\ln(1+h)-h)\cdot\nabla u\|_{L^1_T (\B)}=\Big\|\frac{h}{1+h}\nabla h\cdot\nabla u\Big\|_{L^1_T (\B)}\\
\le&CT^{\frac{1-\e}{2}}\Big\|\frac{h}{1+h}\Big\|_{L^\infty_T(\dot B^{\e}_{[2,\infty],1})}\|h\|_{L^\infty_T(\Bo)}\|u\|_{L^{\frac{2}{1+\e}}_T(\dot B^{1+\e}_{[2,\infty],1})}\\
\le&CT^{\frac{1-\e}{2}}\sum_{m=1}^\infty C^m\|h\|^{m}_{L^\infty_T(\B)}\|h\|_{L^\infty_T(\dot B^{\e}_{[2,\infty],1})}\|h\|_{L^\infty_T(\Bo)}\|u\|_{L^{\frac{2}{1+\e}}_T(\dot B^{1+\e}_{[2,\infty],1})}\\
\le&C2^{(-\frac{1}{2}+\e+\frac{\delta}{2})N}\cdot 2^{(1+{\delta})N}\cdot 2^{(-1+\e)N}=C2^{(-\frac{1}{2}+2\e+\frac{3\delta}{2})N}.
\end{align*}
Moreover, it holds that
\begin{align*}
&\|\int_0^te^{(t-s)\Delta}\big(\nabla(h+\int_0^s \Div U_0\dd\tau)\cdot\nabla u\big)\dd s\|_{L^\infty_T(\dot B^{-\frac{1}{2}}_{4, 1})}\\
\le&\|\int_0^te^{(t-s)\Delta}\Div ((h+\int_0^s \Div U_0\dd\tau)\nabla u)\dd s\|_{L^\infty_T(\dot B^{-\frac{1}{2}}_{4, 1})}\\
&+\|\int_0^te^{(t-s)\Delta}((h+\int_0^s \Div U_0\dd \tau)\Delta u)\dd s\|_{L^\infty_T(\dot B^{-\frac{1}{2}}_{4, 1})}\\
\le&C\|(h+\int_0^t \Div U_0\dd \tau)\nabla u(t)\|_{L^{\frac{4}{3}}_T (\dot B^{0}_{4,1})}+\|(h+\int_0^t \Div U_0)\Delta u(t)\|_{L^1_T(\dot B^{-\frac{1}{2}}_{4,1})}\\
\le&C\|h+\int_0^t \Div U_0\dd \tau\|_{L^\infty_T(\dot B^{\e}_{[2,\infty],1})}(T^{\frac{1}{4}}\|u\|_{L^2_T( \Bo)}+\|u\|_{L^2_T(\Bt)})\\
\le&C2^{(\frac{3\delta}{2}+\e(\frac{5}{2}-\frac{\delta}{2}-\e)-\frac{1}{2})N},
\end{align*}
and
\begin{align*}
\|\nabla\int_0^t \Div U_0\dd s\cdot\nabla (u-U_0)\|_{L^1_T\B}
\le&C\|\nabla\int_0^t \Div U_0\dd s\|_{L^\infty_T\B}\|\nabla (u-U_0)\|_{L^1_T\dot B^{\e}_{[2,\infty],1}}\\
\le&CT^{\frac{1-\e}{2}}\|U_0\|_{L^1_T\Bt}\|u-U_0\|_{L^{\frac{2}{1+\e}}_T\dot B^{1+\e}_{[2,\infty],1}}\\
\le&C2^{(-\frac{1}{2}+2\e+\frac{3}{2}\delta)N}.
\end{align*}
Noting the fact that $3\delta+5\e<1$, we obtain that
\begin{equation}\label{U2}
\|U_2(t_0)\|_{\dot B^0_{[2,\infty],1}}\le C2^{(-\frac{1}{2}+\frac{5}{2}\e+\frac{3}{2}\delta)N}\le C.
\end{equation}
To sum up, combined with \eqref{U_0}, \eqref{U__1} and \eqref{U2}, we get that for $1\le q<2$, $t_0=(\ln N)^{-1}2^{-2N}$,
\begin{align*}
\|u(t_0)\|_{\dot B^{-\frac{1}{2}}_{4,q}}&\ge \|U_1(t_0)\|_{\dot B^{-\frac{1}{2}}_{4,q}}-\|U_0(t_0)\|_{\dot B^{-\frac{1}{2}}_{4,q}}-\|U_2(t_0)\|_{\dot B^{-\frac{1}{2}}_{4,q}}\\
&\ge C(\ln N)^{-3}N^{\frac{1}{q}-\frac{1}{2}}-C{(\ln N)^{-1}}-C\\
&\ge C(\ln N)^{-3}N^{\frac{1}{q}-\frac{1}{2}},
\end{align*}
with initial data $\|u_0\|_{\dot B^{-\frac{1}{2}}_{4,q}}\le C(\ln N)^{-1}$.
\subsection{Proof of Theorem \ref{thm} for \bm{$q>2$}.}
For $q>2$, we construct initial data $(h_0, u_0)$  as follows:
\[h_0=0,\quad u_0=\Big(\frac{ \ln N}{\sqrt{N}}\sum_{ N\le k\le (1+\delta)N}2^{\frac{k}{2}}\varphi_0(x)\sin (2^k x_1), \frac{ \ln N}{\sqrt{N}}\sum_{N\le k\le (1+\delta)N}2^{\frac{k}{2}}\varphi_0(x)\cos (2^k x_1)\Big),\]
where $\varphi_0$ is the function stemming from localization homogeneous operator $\dot\Delta_0$ and $\delta$ is defined in Proposition \ref{exist}. For convenience, we set $\varphi_0(x)=\mathscr{F}^{-1}(\phi(\xi))$. It is easy to check that
\begin{align*}
\|u_0\|_{\dot B^{-\frac{1}{2}}_{4,q}}\le&\frac{ C\ln N}{\sqrt{N}} (\sum_{N-1\le j\le (1+\delta)N+1}\sum_{|j-k|\le 1}2^{\frac{1}{2}(j-k)q}\|\dot\Delta_j(\varphi_0(x)\sin(2^kx_1))\|^q_{L^4})^{\frac{1}{q}}
\le \frac{C\ln N}{ N^{\frac{1}{2}-\frac{1}{q}}}.
\end{align*}
Now we decompose the solution $u$ into three parts:
\[u=U_0+U_1+U_2,\]
here the decomposition is the same with \eqref{decomposition}, and next we respectively estimate $\|U_i(t_0)\|_{\dot B^{-\frac{1}{2}}_{4, q}}(i=0,1,2)$ for $t_0=(\ln N)^{-1}2^{-2N}$.

\noindent\emph{{Estimates on $\|U_0 (t_0)\|_{\dot B^{-\frac{1}{2}}_{4, 1}}$}}.\,\,
From estimates on initial data $u_0$, one yields that
\begin{align*}
\|U_0(t_0)\|_{\dot B^{-\frac{1}{2}}_{4, q}}\le C\|u_0\|_{\dot B^{-\frac{1}{2}}_{4, q}}\le \frac{C\ln N}{ N^{\frac{1}{2}-\frac{1}{q}}}.
\end{align*}
We denote the $k-th$ component of $U_1$ by $U^{(k)}_1$. Then
\begin{align*}
U^{(2)}_1=&-\int_0^t e^{(t-s)\Delta}(U_0\cdot\nabla U^{(2)}_0+\nabla \int_0^s \Div U_0(\tau)\dd \tau\cdot\nabla U^{(2)}_0(s))\dd s\\
=&-\int_0^t e^{(t-s)\Delta}(U^{(1)}_0\partial_{x_1}U^{(2)}_0+\int_0^s\partial^2_{x_1}U^{(1)}_0\dd \tau\partial_{x_1}U^{(2)}_0(s))\dd s\\
&-\int_0^t e^{(t-s)\Delta}(U^{(2)}_0\partial_{x_2}U^{(2)}_0+\int_0^s\partial_{x_1x_2}U^{(2)}_0\dd \tau\partial_{x_1}U^{(2)}_0(s))\dd s\\
&-\int_0^t e^{(t-s)\Delta}(\int_0^s\partial_{x_2}(\partial_{x_1}U^{(1)}_0+\partial_{x_2}U^{(2)}_0)\dd \tau\partial_{x_2}U^{(2)}_0(s))\dd s.
\end{align*}
\emph{{Estimates on $\|U_1(t_0)\|_{\dot B^{-\frac{1}{2}}_{4, q}}$}}.\,\,  Taking advantage of embedding $\dot B^{-\frac{1}{2}}_{4,q}(\RR^2)\hookrightarrow \dot B^{-1}_{\infty,\infty}(\RR^2)$, we can get that
\begin{align*}
&\|U_1(t_0)\|_{\dot B^{-\frac{1}{2}}_{4,q}}\ge C\|U^{(2)}_1(t_0)\|_{\dot B^{-1}_{\infty,\infty}}\ge C\|\dot\Delta_0 U_1(t_0)\|_{L^\infty}\\
\ge& C\Big\|\dot\Delta_0 \int_0^{t_0} e^{({t_0} -s)\Delta}(U^{(1)}_0\partial_{x_1}U^{(2)}_0+\int_0^s\partial^2_{x_1}U^{(1)}_0\dd \tau\partial_{x_1}U^{(2)}_0(s))\dd s\Big\|_{L^\infty}\\
&-C\Big\|\dot\Delta_0\int_0^{t_0}  e^{({t_0} -s)\Delta}(U^{(2)}_0\partial_{x_2}U^{(2)}_0+\int_0^s\partial_{x_1x_2}U^{(2)}_0\dd \tau\partial_{x_1}U^{(2)}_0(s))\dd s\Big\|_{L^\infty}\\
&-C\Big\|\dot\Delta_0\int_0^{t_0}  e^{({t_0} -s)\Delta}(\int_0^s\partial_{x_2}(\partial_{x_1}U^{(1)}_0+\partial_{x_2}U^{(2)}_0)\dd \tau\partial_{x_2}U^{(2)}_0(s))\dd s\Big\|_{L^\infty}.
\end{align*}
By $\|f\|_{L^\infty}\ge|f(0)|=|\int_{\RR^2} \widehat{f}(\xi)\dd \xi|$, we obtain that
\begin{equation}\label{U1}
\begin{aligned}
\|U_1{(t_0)} \|_{\dot B^{-\frac{1}{2}}_{4,q}}
\ge& C\Big|\int_{\RR^2}\phi(\xi) \mathscr{F}\Big(\int_0^{t_0}  e^{({t_0} -s)\Delta}(U^{(1)}_0\partial_{x_1}U^{(2)}_0+\int_0^s\partial^2_{x_1}U^{(1)}_0\dd \tau\partial_{x_1}U^{(2)}_0(s))\dd s\Big)\dd \xi\Big|\\
&-C\Big\|\dot\Delta_0\int_0^{t_0}  e^{({t_0} -s)\Delta}(U^{(2)}_0\partial_{x_2}U^{(2)}_0+\int_0^s\partial_{x_1x_2}U^{(2)}_0\dd \tau\partial_{x_1}U^{(2)}_0(s))\dd s\Big\|_{L^\infty}\\
&-C\Big\|\dot\Delta_0\int_0^{t_0}  e^{({t_0} -s)\Delta}(\int_0^s\partial_{x_2}(\partial_{x_1}U^{(1)}_0+\partial_{x_2}U^{(2)}_0)\dd \tau\partial_{x_2}U^{(2)}_0(s))\dd s\Big\|_{L^\infty}.
\end{aligned}
\end{equation}
According to the definition of $U_0$, one yields that
\begin{align*}
&\Big|\int_{\RR^2}\phi(\xi)\mathscr{F}\Big(\int_0^{t_0}  e^{({t_0} -s)\Delta}(U^{(1)}_0\partial_{x_1}U^{(2)}_0+\int_0^s\partial^2_{x_1}U^{(1)}_0\dd \tau\partial_{x_1}U^{(2)}_0(s))\dd s\Big)\dd \xi\Big|\\
=&\Big|\int_{\RR^2}\phi(\xi)\mathscr{F}\Big(\int_0^{t_0}  e^{({t_0} -s)\Delta}(U^{(1)}_0+\int_0^s\partial^2_{x_1}U^{(1)}_0\dd \tau)\partial_{x_1}U^{(2)}_0(s))\dd s\Big)\dd \xi\Big|\\
=&\Big|\int_{\RR^2}\phi(\xi) \Big(\int_0^{t_0}  e^{-({t_0} -s)|\xi|^2}\int_{\RR^2}\big(e^{-s|\xi-\eta|^2}\widehat{u^1_0}(\xi-\eta)-(\xi_1-\eta_1)^2\int_0^s e^{-\tau|\xi-\eta|^2}\widehat{u^1_0}(\xi-\eta)\dd \tau\big)\\
&\times i\eta_1e^{-s|\eta|^2}\widehat{u^2_0}(\eta)\dd\eta\dd s\Big)\dd \xi\Big|\\
=&\Big|\int_{\RR^2}\phi(\xi) \Big(\int_0^{t_0}  e^{-({t_0} -s)|\xi|^2}\int_{\RR^2}\big(e^{-s|\xi-\eta|^2}\widehat{u^1_0}(\xi-\eta)-\frac{|\xi_1-\eta_1|^2}{|\xi-\eta|^2}(1-e^{-s|\xi-\eta|^2})\widehat{u^1_0}(\xi-\eta)\big)\\
&\times i\eta_1e^{-s|\eta|^2}\widehat{u^2_0}(\eta)\dd\eta\dd s\Big)\dd \xi\Big|\\
\ge&\Big|\int_{\RR^2}\phi(\xi) \Big(\int_0^{t_0}  e^{-({t_0} -s)|\xi|^2}\int_{\RR^2}\big(2e^{-s|\xi-\eta|^2}-1\big)\widehat{u^1_0}(\xi-\eta)i\eta_1e^{-s|\eta|^2}\widehat{u^2_0}(\eta)\dd\eta\dd s\Big)\dd \xi\Big|\\
&-\Big|\int_{\RR^2}\phi(\xi) \Big(\int_0^{t_0}  e^{-({t_0} -s)|\xi|^2}\int_{\RR^2}\frac{|\xi_2-\eta_2|^2}{|\xi-\eta|^2}(1-e^{-s|\xi-\eta|^2})\widehat{u^1_0}(\xi-\eta)i\eta_1e^{-s|\eta|^2}\widehat{u^2_0}(\eta)\dd\eta\dd s\Big)\dd \xi\Big|\\
:=&I-II.
\end{align*}
By Fourier transform, we obtain that
\begin{equation}\label{u01u02}
\begin{aligned}
&\widehat{u^{1}_0}=\frac{\ln N}{2\sqrt{N}}\sum_{N\le k\le(1+\delta)N}2^{\frac{k}{2}}(i\phi(\xi+2^{k}e_1)-i\phi(\xi-2^{k}e_1)),\\
&\widehat{u^{2}_0}=\frac{\ln N}{2\sqrt{N}}\sum_{N\le k\le(1+\delta)N}2^{\frac{k}{2}}(\phi(\xi+2^{k}e_1)+\phi(\xi-2^{k}e_1)).
\end{aligned}
\end{equation}
Noting the fact that supp $\phi(\xi) =\{\xi\in \RR^2| \frac{3}{4}\le|\xi|\le\frac{8}{3}\}$, therefore
\begin{align*}
I=&\frac{(\ln N)^2}{4N}\Big|\sum_{N\le k\le(1+\delta)N}\int_{\RR^2}\phi(\xi) \Big(\int_0^{t_0}  e^{-({t_0} -s)|\xi|^2}\int_{\RR^2}\big(2e^{-s|\xi-\eta|^2}-1\big)e^{-s|\eta|^2}\\
&\times2^k(\eta_1\phi(\xi-\eta+2^{k}e_1)\phi(\eta-2^{k}e_1)-\eta_1\phi(\xi-\eta-2^{k}e_1)
\phi(\eta+2^{k}e_1))\dd\eta\dd s\dd\xi\Big|.
\end{align*}
By triangle inequality, we have
\begin{align*}
I\ge&\frac{(\ln N)^2}{4N}\Big|\sum_{N+\log_4(\ln 4\cdot\ln N)\le k\le(1+\delta)N}\int_{\RR^2}\phi(\xi) \Big(\int_0^{t_0}  e^{-({t_0} -s)|\xi|^2}\int_{\RR^2}\big(2e^{-s|\xi-\eta|^2}-1\big)e^{-s|\eta|^2}\\
&\times2^k(\eta_1\phi(\xi-\eta+2^{k}e_1)\phi(\eta-2^{k}e_1)-\eta_1\phi(\xi-\eta-2^{k}e_1)
\widehat{\varphi_0}(\eta+2^{k}e_1))\dd\eta\dd s\dd\xi\Big|\\
&-\frac{(\ln N)^2}{4N}\Big|\sum_{N\le k\le N+\log_4(\ln 4\cdot\ln N)}\int_{\RR^2}\phi(\xi) \Big(\int_0^{t_0}  e^{-({t_0} -s)|\xi|^2}\int_{\RR^2}\big(2e^{-s|\xi-\eta|^2}-1\big)e^{-s|\eta|^2}\\
&\times2^k(\eta_1\phi(\xi-\eta+2^{k}e_1)\phi(\eta-2^{k}e_1)-\eta_1\phi(\xi-\eta-2^{k}e_1)
\phi(\eta+2^{k}e_1))\dd\eta\dd s\dd\xi\Big|\\
:=&I_1-I_2.
\end{align*}
Noting the support of $\phi(\eta\pm2^{k}e_1)$ and $\phi(\xi-\eta\pm2^{k}e_1)$, we have $$\eta_1\phi(\eta-2^{k}e_1)\ge 0, \quad -\eta_1\phi(\eta+2^{k}e_1)\ge 0.$$
Because $t_0=(\ln N)^{-1}2^{-2N}$, it is easy to check that if $|\xi-\eta|\ge\frac{9}{10}2^k$, for any $k\ge N+\log_4(\ln 4\cdot\ln N)(N\gg 1)$,
\[2e^{-t|\xi-\eta|^2}\le 2e^{-(\ln N)^{-1}2^{-2N}\cdot 2^{2k}}=2e^{-(\frac{9}{10})^2(\ln N)^{-1}2^{-2N}\cdot 2^{2N}\cdot \ln4\cdot\ln N }=2e^{-(\frac{9}{10})^2\ln 4}\le \frac{3}{4}.\]
Therefore, due to $k\ge N\gg1$, it is easy to check that, $|\eta|\ge\frac{9}{10}2^k$ for $\eta\in \text{supp } \phi(\eta-2^k e_1)$,
\begin{align*}
I_1&\ge \frac{(\ln N)^2}{16N}\sum_{k=N+\log_4(\ln 4\cdot\ln N)}^{(1+\delta)N}\int_{\RR^2}\phi(\xi) \int_0^{t_0}  e^{-({t_0} -s)|\xi|^2}\int_{\RR^2}e^{-s|\eta|^2}\\
&\qquad\qquad\qquad\qquad\qquad\times2^k\eta_1\phi(\xi-\eta+2^{k}e_1)\phi(\eta-2^{k}e_1)\dd\eta\dd s\dd\xi\\
&=\frac{(\ln N)^2}{16N}\sum_{k=N+\log_4(\ln 4\cdot\ln N)}^{(1+\delta)N}\iint\phi(\xi) \frac{e^{-t|\xi|^2}-e^{-t|\eta|^2}}{|\eta|^2-|\xi|^2}2^k\eta_1\phi(\xi-\eta+2^{k}e_1)\phi(\eta-2^{k}e_1)\dd\eta\dd\xi\\
&\ge\frac{C(\ln N)^2}{N}\sum_{k=N+\log_4(\ln 4\cdot\ln N)}^{(1+\delta)N}\iint\phi(\xi) \phi(\xi-\eta+2^{k}e_1)\phi(\eta-2^{k}e_1)\dd\eta\dd\xi\\
&\ge C\delta(\ln N)^2.
\end{align*}
For $I_2$,  from above analysis, it yields that
\begin{align*}
I_2\le&\frac{C(\ln N)^2}{N}\sum_{k=N}^{N+\log_4(\ln 4\cdot\ln N)}\iint\phi(\xi) \phi(\xi-\eta+2^{k}e_1)\phi(\eta-2^{k}e_1)\dd\eta\dd\xi
\le\frac{C(\ln N)^3}{N}.
\end{align*}
Taking advantage of \eqref{u01u02}, $II$ can be written as
\begin{align*}
II&=\frac{(\ln N)^2}{N}\Big|\sum_{N\le k\le(1+\delta)N}\int_{\RR^2}\phi(\xi) \Big(\int_0^{t_0} e^{-(t_0t-s)|\xi|^2}\int_{\RR^2}\frac{|\xi_2-\eta_2|^2}{|\xi-\eta|^2}(1-e^{-s|\xi-\eta|^2})e^{-s|\eta|^2}\\
&\times2^k(\eta_1{\phi}(\xi-\eta+2^{k}e_1)\widehat{\phi}(\eta-2^{k}e_1)-\eta_1{\phi}(\xi-\eta-2^{k}e_1)
{\phi}(\eta+2^{k}e_1))\dd\eta\dd s\Big)\dd \xi\Big|\\
\le&\frac{(\ln N)^2}{N}\sum_{N\le k\le(1+\delta)N}\iint\phi(\xi)\frac{e^{-t_0|\xi|^2}-e^{-t_0|\eta|^2}}{|\eta|^2-|\xi|^2}\frac{|\xi_2-\eta_2|^2}{|\xi-\eta|^2}\\
&\times2^k(\eta_1{\phi}(\xi-\eta+2^{k}e_1){\phi}(\eta-2^{k}e_1)-\eta_1{\phi}(\xi-\eta-2^{k}e_1)
{\phi}(\eta+2^{k}e_1))\dd\eta\dd \xi\\
\le&\frac{C(\ln N)^2}{N}\sum_{N\le k\le(1+\delta)N}2^{-2k}\le\frac{C(\ln N)^2}{N2^{2N}}.
\end{align*}
Now we need to estimate the last two term on the right-hand side of inequality \eqref{U1}. At first, it holds that
\begin{align*}
&\|\partial_{x_1}U_0\|_{L^\infty_{t_0}L^\infty}\le C2^{\frac{3(1+\delta)}{2}N},\qquad \|\partial_{x_2}U_0\|_{L^\infty_{t_0}L^\infty}\le C2^{\frac{(1+\delta)}{2}N},\\
&\|\partial_{x_1x_2}U_0\|_{L^\infty_{t_0}L^\infty}\le C2^{\frac{3(1+\delta)}{2}N},\,\,\, \|\partial^2_{x_2}U_0\|_{L^\infty_{t_0}L^\infty}\le C2^{\frac{(1+\delta)}{2}N}.
\end{align*}
Based on these, we obtain that
\begin{align*}
&\Big\|\dot\Delta_0\int_0^{t_0} e^{({t_0}-s)\Delta}(U^{(2)}_0\partial_{x_2}U^{(2)}_0+\int_0^s\partial_{x_1x_2}U^{(2)}_0\dd \tau\partial_{x_1}U^{(2)}_0(s))\dd s\Big\|_{L^\infty}\\
\le& Ct_0\|U_0\|^2_{L^\infty_{t_0}L^\infty}+t^2_0\|\partial_{x_1x_2}U_0\|_{L^\infty_{t_0}L^\infty}
\|\partial_{x_1} U^2_0\|_{L^\infty_{t_0}L^\infty}\le C 2^{(3\delta-1)N},
\end{align*}
and
\begin{align*}
&\Big\|\dot\Delta_0\int_0^{t_0} e^{({t_0}-s)\Delta}(\int_0^s\partial_{x_2}(\partial_{x_1}U^{(1)}_0+\partial_{x_2}U^{(2)}_0)\dd \tau\partial_{x_2}U^{(2)}_0(s))\dd s\Big\|_{L^\infty}\\
\le&Ct^2_0(\|\partial_{x_1x_2}U^{(1)}_0\|_{L^\infty_{t_0}L^\infty}
+\|\partial^2_{x_2}U^{(2)}_0\|_{L^\infty_{t_0}L^\infty})\|\partial_{x_2}U^{(2)}_0\|_{L^\infty_{t_0}L^\infty}
\le C 2^{(2\delta-2)N}.
\end{align*}
Therefore, we have
\begin{align*}
\|U_1\|_{\dot B^{-\frac{1}{2}}_{4,q}}\ge C\delta(\ln N)^2-\frac{C(\ln N)^3}{N}-\frac{C(\ln N)^2}{N2^{2N}}
-C 2^{(3\delta-1)N}\ge  C\delta(\ln N)^2.
\end{align*}
Following the same method in \eqref{U2},  then we obtain
\begin{align*}
\|U_2(t_0)\|_{\dot B^{-\frac{1}{2}}_{4,q}}\le C\|U_2(t_0)\|_{\dot B^0_{[2,\infty],1}}\le C2^{(-\frac{1}{2}+2\e+\frac{3}{2}\delta)N}.
\end{align*}
Therefore, we conclude that, for $3\e+5\delta<1$, and $q>2$
\begin{align*}
\|u(t_0)\|_{\dot B^{-\frac{1}{2}}_{4,q}}&\ge \|U_1(t_0)\|_{\dot B^{-\frac{1}{2}}_{4,q}}-\|U_0(t_0)\|_{\dot B^{-\frac{1}{2}}_{4,q}}-\|U_2(t_0)\|_{\dot B^{-\frac{1}{2}}_{4,q}}\\
&\ge C\delta(\ln N)^2-C(\ln N)N^{\frac{1}{q}-\frac{1}{2}}-C2^{(-\frac{1}{2}+2\e+\frac{3}{2}\delta)N}\\
&\ge C\delta(\ln N)^2,
\end{align*}
associated with initial data $\|u_0\|_{\dot B^{-\frac{1}{2}}_{4,q}}\le C(\ln N)N^{\frac{1}{q}-\frac{1}{2}}$.

\section{Appendix}
\begin{proposition}Let $B(f,g)$ be defined by \eqref{B}. There exists an absolute constant $C$ such that
\[\sup_{t>0}\|B(f,g)(t)\|_{\dot B^{-\frac{1}{2}}_{4,2}}\le C\|f\|_{{\dot B^{-\frac{1}{2}}_{4,2}}}
\|g\|_{{\dot B^{-\frac{1}{2}}_{4,2}}}.\]
\end{proposition}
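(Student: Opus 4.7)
The plan is to split $B=B_1+B_2$ into its two natural components and analyze each via Littlewood--Paley paraproduct combined with heat-semigroup smoothing. A direct computation using $\int_0^s e^{\tau\Delta}\,\dd\tau=(-\Delta)^{-1}(I-e^{s\Delta})$ rewrites the second piece as
\[B_2(f,g)(t)=-\int_0^t e^{(t-s)\Delta}\bigl(\mathcal{R}(e^{s\Delta}f-f)\cdot\nabla e^{s\Delta}g\bigr)\,\dd s,\]
where $\mathcal{R}:=\nabla\Delta^{-1}\Div$ is the matrix of Riesz transforms, which is bounded on $L^p$ for $1<p<\infty$. Hence $B_2$ shares the essential structure of the Navier--Stokes-type bilinear form
\[B_1(f,g)(t):=-\int_0^t e^{(t-s)\Delta}(e^{s\Delta}f\cdot\nabla e^{s\Delta}g)\,\dd s,\]
and I would treat both in parallel.

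For the estimate on $\dot\Delta_j B_1(f,g)(t)$ in $L^4$, decompose via Bony's formula
\[e^{s\Delta}f\cdot\nabla e^{s\Delta}g = T_{e^{s\Delta}f}\nabla e^{s\Delta}g + T_{\nabla e^{s\Delta}g}e^{s\Delta}f + R(e^{s\Delta}f,\nabla e^{s\Delta}g),\]
and bound each piece using $\|\dot\Delta_k e^{s\Delta}f\|_{L^4}\lesssim e^{-cs2^{2k}}\|\dot\Delta_k f\|_{L^4}$ together with Bernstein's inequality. The key gain comes from the outer heat kernel $\|\dot\Delta_j e^{(t-s)\Delta}\|_{L^4\to L^4}\lesssim e^{-c(t-s)2^{2j}}$, so that the $s$-integral produces a factor $\sim 2^{-2(j\vee k\vee k')}$ at the largest relevant frequency. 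Summing in $j$ with the weight $2^{-j/2}$ in $\ell^2$, I would pair $\{2^{-k/2}\|\dot\Delta_k f\|_{L^4}\}_k\in\ell^2$ against $\{2^{-k'/2}\|\dot\Delta_{k'}g\|_{L^4}\}_{k'}\in\ell^2$ by Cauchy--Schwarz; this pairing is self-dual at $q=2$ and yields a convergent sum closing the bound uniformly in $t$.

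The hard part will be the remainder term $R(e^{s\Delta}f,\nabla e^{s\Delta}g)$, where summed frequency pairs $k\sim k'\gtrsim j$ contribute. The formal product rule $\dot B^{-\frac{1}{2}}_{4,q}\cdot\dot B^{-\frac{1}{2}}_{4,q}\hookrightarrow \dot B^{-\frac{3}{2}}_{2,q}$ loses a full derivative relative to the target $\dot B^{-\frac{1}{2}}_{4,q}$, which must be recovered from the heat kernel. Since $\int_0^t(t-s)^{-1}\,\dd s$ is only logarithmically divergent, the gain is borderline and requires a Cauchy--Schwarz argument in $s$ against the weight arising from the heat-flow characterization $\|f\|_{\dot B^{-\frac{1}{2}}_{4,2}}^2\sim\int_0^\infty\sigma^{-\frac{1}{2}}\|e^{\sigma\Delta}f\|_{L^4}^2\,\dd\sigma$. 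This step closes only for $q=2$, consistent with the authors' remark that $q=2$ is precisely the critical index at which the ill-posedness approach of Theorems~1.1 and 1.2 breaks down.
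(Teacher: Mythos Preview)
Your overall framework---Bony paraproduct decomposition combined with heat-semigroup smoothing---matches the paper's, and your treatment of the paraproduct pieces is essentially the same: the paper obtains Beta-type integrals $\int_0^t (t-s)^{-1/2}s^{-1/2}\,\dd s$ after distributing the smoothing between the outer kernel and the inputs. The rewriting of $B_2$ via the Riesz projector $\mathcal{R}=\nabla\Delta^{-1}\Div$ is a nice simplification; the paper instead keeps both terms together and handles the Riesz-type symbol $\frac{(\xi-\eta)_k(\xi-\eta)_\ell}{|\xi-\eta|^2}$ directly on the Fourier side.

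The real divergence is in the remainder. The paper does \emph{not} use the heat-flow characterization of $\dot B^{-1/2}_{4,2}$. Instead, for the high--high to low interactions (their $III_2$, where $j\ge m+N_0$) it invokes the Sobolev embedding $L^2(\RR^2)=\dot B^0_{2,2}\hookrightarrow \dot B^{-1/2}_{4,2}$---this is precisely where $q=2$ enters---and then computes the time integral explicitly on the Fourier side. The resulting bilinear symbol
\[
\frac{e^{-t|\xi|^2}-e^{-t(|\xi-\eta|^2+|\eta|^2)}}{|\xi-\eta|^2+|\eta|^2-|\xi|^2}
\]
(and its companion from the $B_2$ piece) is of size $\sim 2^{-2j}$ uniformly in $t$ when $|\xi-\eta|\sim|\eta|\sim 2^j\gg|\xi|$, and the Coifman--Meyer multiplier theorem (Lemma~\ref{multi}) gives the $L^4\times L^4\to L^2$ bound. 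Cauchy--Schwarz in the frequency index $j$ then closes to $\|f\|_{\dot B^{-1/2}_{4,2}}\|g\|_{\dot B^{-1/2}_{4,2}}$.

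Your plan---Cauchy--Schwarz in $s$ against the heat-flow weight $\sigma^{-1/2}\,\dd\sigma$---could plausibly be made to work, but as written it is not a proof: you have not shown how to split the integrand so that each factor matches the heat-flow measure, and the two inputs share the \emph{same} time variable $s$, so a direct Cauchy--Schwarz in $s$ does not obviously reproduce the product of two heat-flow norms. The paper's route avoids this entirely by performing the $s$-integral on the symbol before estimating, and the $L^2$ embedding makes the target norm insensitive to the output frequency, which is what rescues the otherwise borderline low-frequency sum you correctly identified as the hard part.
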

\begin{proof}
By Bony's paraproduct decomposition, we have
\begin{align*}
B(f,g)&=\sum_{j\in\ZZ}B(\dot S_{j-1} f, \dot\Delta_j g)+\sum_{j\in\ZZ}B(\dot\Delta_j f, \dot S_{j-1} g)+
\sum_{j\in\ZZ}B(\dot \Delta_j f, \dot {\widetilde\Delta}_j g)\\
&:=I+II+III.
\end{align*}
For $I$, using semi-group estimates and Bernstein's inequality, it follows that
\begin{align*}
\|I\|_{\dot B^{-\frac{1}{2}}_{4,2}}
\le& \int_0^t (t-s)^{-\frac{1}{2}}\Big\{2^{-\frac{3}{2}k}\Big\|\sum_{|j-k|\le 5}\dot\Delta_k((\dot S_{j-1}e^{s\Delta}f^i)(\dot\Delta_j\partial_ie^{s\Delta}g))\Big\|_{L^4}\Big\}_{\ell^2(k\in\ZZ)}\dd s\\
&+\int_0^t (t-s)^{-\frac{1}{2}}\Big\{2^{-\frac{3}{2}k}\Big\|\sum_{|j-k|\le 5}\dot\Delta_k((\int_0^s\dot S_{j-1}e^{\tau\Delta}\partial_i\partial_mf^m\dd\tau)(\dot\Delta_j\partial_ie^{s\Delta}g))\Big\|_{L^4}\Big\}_{\ell^2(k\in\ZZ)}\dd s\\
\le& \int_0^t (t-s)^{-\frac{1}{2}}\Big\{\sum_{|j-k|\le 5}2^{-\frac{3}{2}(k-j)}2^{-j}\big\|\dot S_{j-1}e^{s\Delta}f^i\|_{L^\infty}2^{-\frac{1}{2}j}\|\dot\Delta_j\partial_ie^{s\Delta}g)\big\|_{L^4}\Big\}_{\ell^2(k\in\ZZ)}\dd s\\
&+\int_0^t (t-s)^{-\frac{1}{2}}\Big\{\sum_{|j-k|\le 5}2^{-\frac{3}{2}(k-j)}2^{-\frac{3}{2}j}\int_0^s\big\|\dot S_{j-1}e^{\tau\Delta}\partial_i\Div f\big\|_{L^\infty}\dd\tau\big\|\dot\Delta_j\partial_ie^{s\Delta}g\big\|_{L^4}\Big\}_{\ell^2(k\in\ZZ)}\dd s\\
\le& C\int_0^t (t-s)^{-\frac{1}{2}}\Big(\big\|f\|_{\dot B^{-1}_{\infty,2}}\|\partial_ie^{s\Delta}g\big\|_{\dot B^{-\frac{1}{2}}_{4,2}}+\int_0^s\|e^{\tau\Delta}\partial_i\Div f\|_{\dot B^{-\frac{3}{2}}_{\infty, 2}}\dd \tau\|\nabla e^{s\Delta}g\|_{\dot B^0_{4,2}}\Big)\dd s\\
\le&C\int_0^t (t-s)^{-\frac{1}{2}}\Big(\big\|f\|_{\dot B^{-\frac{1}{2}}_{4,2}}s^{-\frac{1}{2}}\|g\big\|_{\dot B^{-\frac{1}{2}}_{4,2}}+\int_0^s\|e^{\tau\Delta}\partial_i\Div f\|_{\dot B^{-1}_{4, 2}}\dd \tau\|\nabla e^{s\Delta}g\|_{\dot B^0_{4,2}}\Big)\dd s\\
\le&C\int_0^t (t-s)^{-\frac{1}{2}}\big(s^{-\frac{1}{2}}+s^{-\frac{3}{4}}\int_0^s\tau^{-\frac{3}{4}}\dd \tau \big)\dd s\| f\|_{\dot B^{-\frac{1}{2}}_{4, 2}}\|g\|_{\dot B^{-\frac{1}{2}}_{4,2}}\\
\le&C\int_0^t (t-s)^{-\frac{1}{2}}s^{-\frac{1}{2}}\dd s\| f\|_{\dot B^{-\frac{1}{2}}_{4, 2}}\|g\|_{\dot B^{-\frac{1}{2}}_{4,2}}\le C\| f\|_{\dot B^{-\frac{1}{2}}_{4, 2}}\|g\|_{\dot B^{-\frac{1}{2}}_{4,2}}.
\end{align*}
For $II$, owning to $\dot B^0_{4,2}(\RR^2)\hookrightarrow L^4(\RR^2)$, we have
\begin{align*}
\|II\|_{\dot B^{-\frac{1}{2}}_{4,2}}
\le& \int_0^t (t-s)^{-\frac{1}{2}}\Big\{2^{-\frac{3}{2}k}\Big\|\sum_{|j-k|\le 5}\dot\Delta_k((\dot S_{j-1}\partial_ie^{s\Delta}g)(\dot \Delta_je^{s\Delta}f^i))\Big\|_{L^4}\Big\}_{\ell^2(k\in\ZZ)}\dd s\\
+\int_0^t& (t-s)^{-\frac{1}{2}}\Big\{2^{-\frac{3}{2}k}\Big\|\sum_{|j-k|\le 5}\dot\Delta_k((\dot S_{j-1}\partial_ie^{s\Delta}g)(\int_0^s\dot\Delta_je^{\tau\Delta}\partial_i\partial_mf^m\dd\tau))\Big\|_{L^4}\Big\}_{\ell^2(k\in\ZZ)}\dd s\\
\le \int_0^t &(t-s)^{-\frac{1}{2}}\Big\{\sum_{|j-k|\le 5}2^{-\frac{3}{2}(k-j)}2^{-\frac{1}{2}j}\|\dot S_{j-1}\partial_ie^{s\Delta}g)\big\|_{L^4}2^{-j}\big\|\dot \Delta_je^{s\Delta}f^i\|_{L^\infty}\Big\}_{\ell^2(k\in\ZZ)}\dd s\\
+\int_0^t &(t-s)^{-\frac{1}{2}}\Big\{\sum_{|j-k|\le 5}2^{-\frac{3}{2}(k-j)}2^{-\frac{3}{2}j}\int_0^s\big\|\dot \Delta_je^{\tau\Delta}\partial_i\Div f\big\|_{L^\infty}\dd\tau\big\|\dot S_{j-1}\partial_ie^{s\Delta}g\big\|_{L^4}\Big\}_{\ell^2(k\in\ZZ)}\dd s\\
\le C\int_0^t &(t-s)^{-\frac{1}{2}}\Big(\big\|f\|_{\dot B^{-1}_{\infty,2}}\|\partial_ie^{s\Delta}g\big\|_{\dot B^{-\frac{1}{2}}_{4,2}}+\int_0^s\|e^{\tau\Delta}\partial_i\Div f\|_{\dot B^{-\frac{3}{2}}_{\infty, 2}}\dd \tau\|\nabla e^{s\Delta}g\|_{\dot B^0_{4,2}}\Big)\dd s\\
\le C\int_0^t &(t-s)^{-\frac{1}{2}}s^{-\frac{1}{2}}\dd s\| f\|_{\dot B^{-\frac{1}{2}}_{4, 2}}\|g\|_{\dot B^{-\frac{1}{2}}_{4,2}}\le C\| f\|_{\dot B^{-\frac{1}{2}}_{4, 2}}\|g\|_{\dot B^{-\frac{1}{2}}_{4,2}}.
\end{align*}
For $III$, using Littlewood-Paley decomposition yields that
\begin{align*}
III=&\sum_{m\in\ZZ}\sum_{j\ge m-3}\dot \Delta_m\int_0^te^{(t-s)\Delta}\big(\dot\Delta_j e^{s\Delta}f\dot{\widetilde{\Delta}}_j\nabla e^{s\Delta}g+\dot\Delta_j \int_0^s\nabla\Div e^{\tau\Delta }f\dot{\widetilde{\Delta}}_j\nabla e^{s\Delta}g\big)\dd s\\
=&\sum_{m\in\ZZ}\sum_{m-3\le j< m+N_0}\dot \Delta_m\int_0^te^{(t-s)\Delta}\big(\dot\Delta_j e^{s\Delta}f\dot{\widetilde{\Delta}}_j\nabla e^{s\Delta}g+\dot\Delta_j \int_0^s\nabla\Div e^{\tau\Delta }f\dot{\widetilde{\Delta}}_j\nabla e^{s\Delta}g\big)\dd s\\
&+\sum_{m\in\ZZ}\sum_{j\ge m+N_0}\dot \Delta_m\int_0^te^{(t-s)\Delta}\big(\dot\Delta_j e^{s\Delta}f\dot{\widetilde{\Delta}}_j\nabla e^{s\Delta}g+\dot\Delta_j \int_0^s\nabla\Div e^{\tau\Delta }f\dot{\widetilde{\Delta}}_j\nabla e^{s\Delta}g\big)\dd s\\
:=&III_1+III_2.
\end{align*}
For $III_1$, thanks to Young's inequality, we obtain that
\begin{align*}
\|III_1\|_{\dot B^{-\frac{1}{2}}_{4,2}}\le&\Big\{\sum_{|k-m|\le 1}2^{-\frac{3}{2}k}\sum_{|j-m|\le N_0}\int_0^t(t-s)^{-\frac{1}{2}}\big\|\dot\Delta_k\dot\Delta_m(( \dot\Delta_je^{s\Delta}f^i)(\dot{\widetilde\Delta}_j\partial_ie^{s\Delta}g))\big\|_{L^4}\Big\}_{\ell^2(k)}\\
+\Big\{\sum_{|k-m|\le 1}&2^{-\frac{3}{2}k}\sum_{|j-m|\le N_0}\int_0^t(t-s)^{-\frac{1}{2}}\big\|\dot\Delta_k\dot\Delta_m((\int_0^s\dot \Delta_je^{\tau\Delta}\partial_i\partial_mf^m\dd\tau)(\dot{\widetilde\Delta}_j\partial_ie^{s\Delta}g))\big\|_{L^4}\Big\}_{\ell^2(k)}\\
\le C\Big\{2^{-\frac{3}{2}m}&\sum_{|j-m|\le N_0}\int_0^t(t-s)^{-\frac{1}{2}}\big\|\dot\Delta_m(( \dot\Delta_je^{s\Delta}f^i)(\dot{\widetilde\Delta}_j\partial_ie^{s\Delta}g))\big\|_{L^4}\Big\}_{\ell^2(m)}\\
+C\Big\{2^{-\frac{3}{2}m}&\sum_{|j-m|\le N_0}\int_0^t(t-s)^{-\frac{1}{2}}\big\|\dot\Delta_m((\int_0^s\dot \Delta_je^{\tau\Delta}\partial_i\partial_mf^m\dd\tau)(\dot{\widetilde\Delta}_j\partial_ie^{s\Delta}g))\big\|_{L^4}\Big\}_{\ell^2(m)}\\
\le C\Big\{\sum_{|j-m|\le N_0}&2^{-\frac{3}{2}(m-j)}\int_0^t(t-s)^{-\frac{1}{2}}2^{-j}\big\| \dot\Delta_je^{s\Delta}f\|_{L^\infty}2^{-\frac{1}{2}j}\|\dot{\widetilde\Delta}_j\partial_ie^{s\Delta}g\big\|_{L^4}\Big\}_{\ell^2(m)}\\
+C\Big\{\sum_{|j-m|\le N_0}&2^{-\frac{3}{2}(m-j)}\int_0^t(t-s)^{-\frac{1}{2}}\int_0^s2^{-\frac{3}{2}j}\big\|\dot \Delta_je^{\tau\Delta}\partial_i\partial_mf^m\|_{L^\infty}\dd\tau\|\dot{\widetilde\Delta}_j\partial_ie^{s\Delta}g\big\|_{L^4}\Big\}_{\ell^2(m)}\\
\le C\int_0^t  (t-s&)^{-\frac{1}{2}}\Big(\big\|f\|_{\dot B^{-1}_{\infty,2}}\|\partial_ie^{s\Delta}g\big\|_{\dot B^{-\frac{1}{2}}_{4,2}}+\int_0^s\|e^{\tau\Delta}\partial_i\Div f\|_{\dot B^{-\frac{3}{2}}_{\infty, 2}}\dd \tau\|\nabla e^{s\Delta}g\|_{\dot B^0_{4,2}}\Big)\dd s\\
\le C\int_0^t (t-s&)^{-\frac{1}{2}}s^{-\frac{1}{2}}\dd s\| f\|_{\dot B^{-\frac{1}{2}}_{4, 2}}\|g\|_{\dot B^{-\frac{1}{2}}_{4,2}}\le C\| f\|_{\dot B^{-\frac{1}{2}}_{4, 2}}\|g\|_{\dot B^{-\frac{1}{2}}_{4,2}}.
\end{align*}
For $III_2$, due to $L^2(\RR^2)\hookrightarrow \dot B^0_{2,2}(\RR^2)\hookrightarrow \dot B^{-\frac{1}{2}}_{4,2}(\RR^2)$ and Lemma \ref{multi}, it yields that
\begin{align*}
&\|III_2\|_{\dot B^{-\frac{1}{2}}_{4,2}}\le\|III_2\|_{L^2}\\
\le& \sum_{j\in\ZZ}\|\sum_{m\le j-N_0}\dot \Delta_m\int_0^te^{(t-s)\Delta}\big(\dot\Delta_j e^{s\Delta}f\dot{\widetilde{\Delta}}_j\nabla e^{s\Delta}g+\dot\Delta_j \int_0^s\nabla\Div e^{\tau\Delta }f\dot{\widetilde{\Delta}}_j\nabla e^{s\Delta}g\big)\dd s\|_{L^2}\\
\le&\sum_{j\in\ZZ}\Big\|\mathscr{F}^{-1}\Big\{\int_{\RR^2}\sum_{m\le j-N_0}\widehat{\varphi}_m(\xi)
\frac{e^{-t|\xi|^2}-e^{-t(|\xi-\eta|^2+|\eta|^2)}}{|\xi-\eta|^2+|\eta|^2-|\xi|^2}
\widehat{\varphi}_j(\xi-\eta)\widehat{f}(\xi-\eta)\widehat{\widetilde{\varphi}}_j(\eta)i\eta\widehat{g}(\eta)\dd\eta\Big\}\Big\|_{L^2}\\
&+\sum_{j\in\ZZ}\Big\|\mathscr{F}^{-1}\Big\{\int_{\RR^2}\sum_{m\le j-N_0}\widehat{\varphi}_m(\xi)\big(\frac{e^{-t|\xi|^2}-e^{-t|\eta|^2}}{|\eta|^2-|\xi|^2}-\frac{e^{-t|\xi|^2}-e^{-t(|\eta|^2+|\xi-\eta|^2)}}{|\eta|^2+|\xi-\eta|^2-|\xi|^2})\\
&\qquad\qquad\qquad\qquad\times\frac{(\xi-\eta)_k(\xi-\eta)_{\ell}}{|\xi-\eta|^2}\widehat{\varphi}_j(\xi-\eta)\widehat{f}^{\ell}(\xi-\eta)\widehat{\widetilde{\varphi}}_j(\eta)i\eta_k\widehat{g}(\eta)\dd\eta\Big\}\Big\|_{L^2}\\
\le& C\sum_{j\in\ZZ}2^{-2j}\|\dot\Delta_j f\|_{L^4}\|\dot{\widetilde{\Delta}}_j \nabla g\|_{L^4}
\le C\sum_{j\in\ZZ}\sum_{|j'-j|\le 1}2^{-2j}2^{j'}\|\dot\Delta_j f\|_{L^4}\|\dot{{\Delta}}_{j'} g\|_{L^4}\\
=&C\sum_{j\in\ZZ}\sum_{|j'-j|\le 1}2^{-\frac{3}{2}(j-j')}2^{-\frac{1}{2}j}\|\dot\Delta_j f\|_{L^4}2^{-\frac{1}{2}j'}\|\dot{{\Delta}}_{j'} g\|_{L^4}\le C\|f\|_{\dot B^{-\frac{1}{2}}_{4,2}}\|g\|_{\dot B^{-\frac{1}{2}}_{4,2}}.
\end{align*}
\end{proof}

\end{document}